\documentclass[12pt]{article}
\usepackage{latexsym,amssymb,amsmath,amsfonts,latexsym,amscd,amsthm}
\begin{document}

\newtheorem{ob}{Observation}[section]
    \newtheorem{lem}{Lemma}[section]
    \newtheorem{rem}{Remark}[section]
	\newtheorem{prop}{Proposition}[section]
\numberwithin{equation}{section}
\title{
\Large
\bf
Riemannian geodesics
    \newtheorem{cor}{Corollary}[section]
    \newtheorem{theo}{Theorem}[section]
    \newtheorem{defi}{Definition}[section]
    \newtheorem{exam}{Example}[section]
of semi Riemannian warped  product metrics\footnote{2000 AMS 
Mathematics
Subject Classification:\emph{ 53C22, 53C50}
\newline
Key words and phrases: \emph{semi Riemannian warped product,
geodesics}}} \author{
\small
Oriella M. Amici
and
\  Biagio C. Casciaro\\
\small  Dipartimento di Matematica
Universit{\`a} di Bari\\
\small Campus Universitario\\
\small Via Orabona 4, 70125 Bari, Italy
\\
\small casciaro@dm.uniba.it \& amici@dm.uniba.it
}
\maketitle

\begin{abstract}
 Let $(M_1,g_1)$ and $(M_2,g_2)$ be two
$C^\infty$--differentiable connected, complete Riemannian manifolds, 
$k:M_1\to\mathbb R$ a $C^\infty$--differentiable function, having $0<k_0<k(x)\leq K_0$, for any $x\in M_1$ and $g:=g_1-kg_2$ the
semi Riemannian metric on the product manifold $M:=M_1\times M_2$.
\par
We associate to $g$ a suitable family of Riemannian metrics $G_r+g_2$, with $r>-K_0^{-1}$, on $M$ and we call \emph{Riemannian} geodesics of $g$ the geodesics of $g$ which are geodesics of a metric of the previous family, via a suitable reparametrization. 
\par
Among the properties of these geodesics, we quote:
\par
For any $z_0=(x_0,y_0)\in M$ and for any $y_1\in M_2$ there exists a subset $A$ of $M_1$, such that all the geodesics of $g$ joining $z_0$ with a point $(x_1,y_1)$, with $x_1\in A$, are Riemannian.  
The Riemannian geodesics of $g$ determine a "partial" property of geodesic connection on $M$. 
Finally, we determine two new classes of semi Riemannian metrics (one of which includes some FLRM-metrics), geodesically connected by Riemannian geodesics of $g$.
\end{abstract}
\newpage
\section{Introduction}
Let $(M_1,g_1)$ and $(M_2,g_2)$ be two connected, complete, Riemannian manifolds.
\par
For the greater part of the paper, we shall use the assumption of the completeness of the two manifolds only to avoid to write a long and trivial series of inequalities.
\par 
Let $k:M_1\to{\mathbb R}$ be a $C^\infty$--differentiable function, bounded from below away from zero. 
\par
We consider the semi Riemannian warped product metric $g:g_1-kg_2$ and the family of Riemannian metrics $G_r+g_2$ on the manifold $M:=M_1\times M_2$, where $G_r:=(k^{-1}+r)g_1$ and $r>-K_0^{-1}:=k_1$, being $K_0:=\sup_{x\in M_1}\{k(x)\}$, if $k$ is bounded from above and $r>0:=k_1$ in the other case. 
\par
Then we prove that $M$ is complete with respect to the metric $G_r+g_2$  and that the geodesics of $G_r+g_2$, belonging to a suitable subset, determine geodesics of $g$, via a suitable reparametrization, for any $r>k_1$. 
\par
We call them \emph{Riemannian geodesics of $g$}. 
\par
We prove some properties of these geodesics and here we quote some of them as examples.
\par
Let us consider $z_0=(x_0,y_0)\in M$ and a geodesic $\zeta=(\gamma,\tau):[0,1]\to M$ of $g$, with $\gamma(0)=x_0$, $\tau(0)=y_0$, $\dot\gamma(0)=\widetilde X$ and $\dot\tau(0)=\widetilde Y$. If $k$ is bounded and
\[
g_1(\widetilde{X},\widetilde{X})>k(x_0)g_2(\widetilde{Y},\widetilde{Y})\frac{K_0-k(x_0)}{K_0}\ ;
\]
then $\zeta$ is a Riemannian geodesic of $g$.
\par 
An analogous statement holds, if $k$ is unbounded from above.

\par
A surprising property, being the Morse theory of Riemannian and semi Riemannian metrics quite different, is the following.
\par
Since $M_1$ and $M_2$ are connected and complete with respect to the respective Riemannian metrics $g_1$ and $g_2$,
the manifold $M_1$ is \emph{positive and negative geodesically connected with respect to $g$}; i.e., for any real number $r>k_1$, for any $z_0=(x_0,y_0)\in M$, for any $x_1\in M_1$ and for any geodesic $\nu:{\mathbb R}\to M_2$ of $g_2$, having $\nu(0)=y_0$, there exists $t_0\in{\mathbb R}$ such that the point $z_0$ and the point $(x_1,\nu(t_0))$ (and the point $(x_1,\nu(-t_0))$) can be joined by a Riemannian geodesic of $g$, obtained by reparametrizing a suitable geodesic of $G_r+g_2$.
\par
Analogously, the manifold $M_2$ is \emph{positive and negative geodesically connected with respect to $g$}, too.
\par
Hence, we shall say that \emph{$M$ is partially Riemannian connected with respect to $g$}
\par
More surprising are the following two results.
\par
\emph{If $M_1$ and $M_2$ are connected and complete with respect to the respective Riemannian metrics $g_1$ and $g_2$, if the dimension of $M_1$ is greater than one and $M_1$ is simply connected, if $g_1$ has a negative sectional curvature, if $k$ is bounded from below away from zero and if the Hessian of $k$ verifies a\\ suitable inequality $($see \eqref{ne}, below$)$, then $M$ is geodesically connected by means Riemannian geodesic of $g$.}
\par
\emph{If $M_1={\mathbb R}$, then $g$ is an FLRW--metric $($with speed of light $c=1)$ and $M$ is geodesically connected by Riemannian geodesic of $g$, provided $M_2$ connected and complete with respect to $g_2$ and $k$ bounded from below away from zero.}
\par
The FLRW--metrics are used in cosmology to study the early universe (see, e. g., \cite{BG}).
\par
The paper 
ends with an Appendix in which we determine a sufficient condition such that $G_r$ has negative sectional curvature, for any $r\in(k_1,+\infty)$.
\par
We conclude by noticing that the Levi--Civita connection of $g$ is not used in this paper, because it hides all the relations between the metric tensor $g$ and the Riemannian metric $G_r+g_2$.
\par
In this case, the Levi--Civita connection of $g_1+g_2$ allows us to use these relations.
\par
Hence, we consider this paper as a first application of the results obtained in \cite{ACF}, \cite{CF} and \cite{CK}.
\section{Preliminaries}
This Section contains the main geometric objects, which are needed in  the following.
\par
We also state some straightforward results.
\par
Let
$(M_1,g_1)$, $(M_2,g_2)$  be
two connected, complete, Riemannian manifolds and 
$\overset{\scriptscriptstyle1}\nabla$,
$\overset{\scriptscriptstyle2}\nabla$
the Le\-vi-Ci\-vi\-ta connections determined by the metrics
$g_1$ and $g_2$,
respectively.
\par
Let $k:M_1\to\mathbb R$ be a smooth bounded map.
\par
We suppose
\begin{eqnarray}
\label{boun} 
0<k_0:=\inf_{x\in M_1}\{k(x)\}\ .
\end{eqnarray}
On the manifold $M:=M_1\times M_2$, we consider the tensor $g:=g_1-k\cdot g_2$, which defines a \emph{semi Riemannian warped product metric}, having the signature equal to the dimension of $M_1$.
\par 
The geometry of warped product metrics is described in
details in~\cite{ON}.
\par
We shall set 
\[G_r:=(\frac{1}{k}+r)\cdot g_1
\]
and $G_r$ is a Riemannian metric on $M_1$, for any $r>k_1$, being $k_1:=-K_0^{-1}$ if $k$ is bounded and $K_0:=\sup_{x\in M_1}\{k(x)\}$, and $k_1:=0$ in the other case.
\par
Finally, we set  $I:=[0,1]$.
\par
From \cite{CK}, it follows. 
\begin{lem}
\label{th1}
A differentiable curve $\zeta=(\gamma,\tau):I\to M$ is a geodesic of $g$, if and only if it satisfies
the following system of ordinary differential equations
\begin{eqnarray}
\label{eq1.1}
\overset{\scriptscriptstyle1}\nabla_{\dot\gamma}\dot\gamma
&=&
-\frac{1}{2} g_2(\dot\tau,\dot\tau)\cdot g_1^\sharp(dk)\circ\gamma
\\
\label{eq1.2}
\overset{\scriptscriptstyle2}\nabla_{\dot\tau}\dot\tau
&=&
-\frac{1}{k\circ\gamma}dk(\dot\gamma)\cdot\dot\tau
\end{eqnarray}
where $g_1^\sharp:T^\ast M_1\to TM_1$ is the canonical isomorphism
of bundles
induced by $g_1$.
\end{lem}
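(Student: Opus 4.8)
The plan is to avoid building the Levi--Civita connection of $g$ and instead to characterize the affinely parametrized geodesics of $g$ as the critical points of the energy functional $E(\zeta)=\tfrac12\int_0^1 g(\dot\zeta,\dot\zeta)\,dt$, i.e.\ as the solutions of the Euler--Lagrange equations of the Lagrangian $L=\tfrac12\,g(\dot\zeta,\dot\zeta)$. Since $g_1,g_2$ are positive definite and $k\ge k_0>0$ by \eqref{boun}, the tensor $g=g_1-k\,g_2$ is nondegenerate, so this variational characterization is available and is equivalent to $\nabla^g_{\dot\zeta}\dot\zeta=0$; the indefinite signature enters only through the signs already displayed in $g$. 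In product coordinates $(x^a)$ on $M_1$ and $(y^i)$ on $M_2$ one has $g_{ab}=g_{1,ab}(x)$, $g_{ij}=-k(x)\,g_{2,ij}(y)$ and $g_{ai}=0$, so that $L=\tfrac12\big[g_{1,ab}\dot x^a\dot x^b-k\,g_{2,ij}\dot y^i\dot y^j\big]$. Because $g_1$ and $k$ depend on $x$ alone and $g_2$ on $y$ alone, the block structure of these coordinates makes the Euler--Lagrange system split automatically into an $x$-part and a $y$-part in which only the Christoffel symbols of $g_1$ and of $g_2$ appear.

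First I would write the Euler--Lagrange equation in the coordinates $x^c$. The term $\tfrac{d}{dt}\partial L/\partial\dot x^c$ together with the $g_1$-part of $\partial L/\partial x^c$ reassembles, after the usual symmetrization of the derivatives of $g_{1,ab}$, into $g_{1,cd}\big(\ddot x^d+\overset{\scriptscriptstyle1}\Gamma^d_{ab}\dot x^a\dot x^b\big)$, i.e.\ the lowered form of $g_1(\overset{\scriptscriptstyle1}\nabla_{\dot\gamma}\dot\gamma,\,\cdot\,)$. The only surviving extra contribution is the term $-\tfrac12(\partial_c k)\,g_2(\dot\tau,\dot\tau)$ coming from differentiating $k$ in $\partial L/\partial x^c$. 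Raising the index with $g_1^\sharp$ then yields exactly \eqref{eq1.1}.

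Next I would treat the Euler--Lagrange equation in the coordinates $y^k$. Here $\partial L/\partial\dot y^k=-k\,g_{2,kj}\dot y^j$, and differentiating in $t$ produces one term carrying $\tfrac{d}{dt}(k\circ\gamma)=dk(\dot\gamma)$ together with the two derivatives of $g_{2,kj}$. The key structural step is to divide the whole equation by the nonzero factor $-k$: this clears the overall warping factor and leaves the pure-$y$ contributions precisely in the combination $g_{2,kj}\big(\ddot y^j+\overset{\scriptscriptstyle2}\Gamma^j_{il}\dot y^i\dot y^l\big)$, the lowered $g_2$-covariant acceleration, \emph{without} any warped correction to the connection. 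The remaining term is $\tfrac1k\,dk(\dot\gamma)\,g_{2,kj}\dot y^j$, and cancelling the nondegenerate factor $g_{2,kj}$ gives \eqref{eq1.2}. The converse is immediate, since the same computation shows that any solution of \eqref{eq1.1}--\eqref{eq1.2} satisfies the Euler--Lagrange system and hence is a geodesic of $g$.

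I do not expect a genuine obstacle: the content is the correct reassembly of the Christoffel symbols of $g_1$ and $g_2$ and the bookkeeping of the single cross-term in $\partial_a k$. The one point I would state explicitly is the cancellation of the overall factor $k$ in the $y$-equation, which is exactly what guarantees that $\overset{\scriptscriptstyle2}\nabla$ in \eqref{eq1.2} is the Levi--Civita connection of $g_2$ itself rather than a warped modification. Equivalently, one may verify directly that the mixed and vertical Christoffel symbols of $g$ reduce to $\Gamma^i_{aj}=\tfrac1{2k}(\partial_a k)\,\delta^i_j$, $\Gamma^a_{ij}=\tfrac12\,g_1^{ab}(\partial_b k)\,g_{2,ij}$ and $\Gamma^i_{jl}=\overset{\scriptscriptstyle2}\Gamma^i_{jl}$, which feed into the product geodesic equations to give the same pair of identities, in agreement with the statement derived in \cite{CK}.
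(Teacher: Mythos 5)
Your argument is correct: the Euler--Lagrange computation is accurate in both blocks, the division by the factor $-k$ is legitimate because $k\ge k_0>0$ by \eqref{boun}, and your alternative check via the Christoffel symbols of $g$ (including the signs, which work out because the warping factor here is $-k$) reproduces the same two equations. Be aware, however, that the paper contains no proof of this lemma to compare against: Lemma \ref{th1} is stated as an immediate consequence of the tensorial calculus-of-variations formalism of \cite{CK}. Your route --- characterizing the affinely parametrized geodesics of the nondegenerate metric $g$ as critical points of the energy functional and letting the block structure of product coordinates split the Euler--Lagrange system into a $g_1$-part and a $g_2$-part --- is in the same variational spirit as that reference, and it also respects the paper's stated policy of never constructing the Levi--Civita connection of $g$ itself. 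In effect you have supplied, in a self-contained way, the details the paper delegates to \cite{CK}. The one step you rightly isolate as essential is the cancellation of the overall warping factor in the $y$-equation, which is precisely what makes \eqref{eq1.2} involve the unwarped connection $\overset{\scriptscriptstyle2}\nabla$ of $g_2$ rather than a modified one; this feature is what the rest of the paper exploits when reparametrizing geodesics of $G_r+g_2$ into geodesics of $g$.
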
  
From \cite{CK}, we also get:
\begin{lem}
\label{geg}
The map $\mu:I\to M_1$ is  a geodesic with respect to the 
metric 
$G_r$ if and  only if 
\begin{equation}
\label{geG}
\overset{\scriptscriptstyle1}\nabla_{\dot\mu}\dot\mu
=\frac{1}{2k\circ\mu(1+rk\circ\mu)}\left\{2dk(\dot\mu)\cdot\dot\mu-
g_1(\dot\mu,\dot\mu)\cdot g_1^\sharp(dk)\circ\mu
\right\}\ .
\end{equation}
\end{lem}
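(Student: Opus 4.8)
The plan is to exploit that
\[
G_r=\Bigl(\tfrac1k+r\Bigr)g_1=\frac{1+rk}{k}\,g_1
\]
is pointwise conformal to $g_1$, and to apply the classical transformation law relating the Levi--Civita connections of two conformally related metrics. First I would set $\phi:=\frac{1+rk}{k}$ and write $G_r=e^{2f}g_1$ with $f:=\tfrac12\log\phi$. The hypothesis $r>k_1$ guarantees that $1+rk>0$ everywhere (when $k$ is bounded this follows from $r>-K_0^{-1}$ together with $0<k\le K_0$, and when $k$ is unbounded from $r>0$), so that $\phi>0$ and $f$ is a well-defined smooth function on $M_1$; in particular $G_r$ is genuinely Riemannian and the denominators occurring in \eqref{geG} never vanish.

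Denoting by $\widetilde\nabla$ the Levi--Civita connection of $G_r=e^{2f}g_1$, I would recall the conformal formula
\[
\widetilde\nabla_XY=\overset{\scriptscriptstyle1}\nabla_XY+df(X)\,Y+df(Y)\,X-g_1(X,Y)\,g_1^\sharp(df)
\]
valid for all vector fields $X,Y$ on $M_1$. Since, as usual, a geodesic of $G_r$ is understood to be affinely parametrized, the curve $\mu$ is a geodesic of $G_r$ exactly when $\widetilde\nabla_{\dot\mu}\dot\mu=0$. Specializing the formula to $X=Y=\dot\mu$, this vanishing is equivalent to
\[
\overset{\scriptscriptstyle1}\nabla_{\dot\mu}\dot\mu=-2\,df(\dot\mu)\,\dot\mu+g_1(\dot\mu,\dot\mu)\,g_1^\sharp(df)\circ\mu\ .
\]

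It then only remains to compute $df$. Writing $f=\tfrac12\bigl(\log(1+rk)-\log k\bigr)$ and differentiating gives
\[
df=\frac12\left(\frac{r}{1+rk}-\frac1k\right)dk=-\frac{1}{2k(1+rk)}\,dk\ ,
\]
whence $df(\dot\mu)=-\dfrac{dk(\dot\mu)}{2k(1+rk)}$ and $g_1^\sharp(df)=-\dfrac{1}{2k(1+rk)}\,g_1^\sharp(dk)$. Substituting these into the displayed geodesic condition and collecting the common factor $\frac{1}{2k\circ\mu(1+rk\circ\mu)}$ reproduces exactly \eqref{geG}, and since every step is an equivalence this establishes both implications at once.

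The argument is entirely routine once the conformal transformation law is in place, so I do not expect a genuine obstacle; the only points requiring care are the sign bookkeeping in the expression for $df$ and the preliminary check that $1+rk>0$ under the assumption $r>k_1$, which is precisely what legitimises the substitution $G_r=e^{2f}g_1$. Should one wish to avoid quoting the conformal formula, the same identity can be obtained from scratch by expressing the Christoffel symbols of $G_r=\phi\,g_1$ in a local chart and contracting twice with $\dot\mu$, but that route is longer and conceptually less transparent.
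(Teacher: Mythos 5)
Your proposal is correct: the positivity check $1+rk>0$ for $r>k_1$ is exactly what legitimises writing $G_r=e^{2f}g_1$ with $f=\tfrac12\log\frac{1+rk}{k}$, the conformal transformation law you quote is the standard one, and the computation $df=-\frac{1}{2k(1+rk)}\,dk$ substituted into $\overset{\scriptscriptstyle1}\nabla_{\dot\mu}\dot\mu=-2\,df(\dot\mu)\,\dot\mu+g_1(\dot\mu,\dot\mu)\,g_1^\sharp(df)\circ\mu$ reproduces \eqref{geG} with the correct signs and the common factor $\frac{1}{2k\circ\mu(1+rk\circ\mu)}$; since the geodesic condition $\widetilde\nabla_{\dot\mu}\dot\mu=0$ is rewritten by equivalences throughout, both implications follow at once.

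Note, however, that the paper does not prove this lemma at all: it imports it, together with Lemma~\ref{th1}, from the reference \cite{CK}, whose machinery is a tensorial formulation of the calculus of variations, so that the equation \eqref{geG} arises there as the Euler--Lagrange equation of the energy functional of $G_r$ expressed through the background connection $\overset{\scriptscriptstyle1}\nabla$. Your route is therefore genuinely different and has the merit of being self-contained: it needs only the classical formula for Levi--Civita connections of conformally related metrics, and it makes this part of Section~2 independent of \cite{CK}. It is also very much in the spirit of the paper, since the Appendix uses precisely the same device (the difference tensor $\Pi$ between the Levi--Civita connections of $hg_1$ and $g_1$, with $h=k^{-1}+r$) to compare sectional curvatures. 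What the conformal trick does \emph{not} give you is Lemma~\ref{th1}: the warped metric $g=g_1-kg_2$ is not conformal to a product metric, so the variational machinery of \cite{CK} (or the standard warped-product formulas of \cite{ON}) is still needed there; this is presumably why the authors cite a single source for both lemmas rather than arguing as you do.
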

We conclude this number by two lemmas needed in the following.
\begin{lem}
\label{th:2}
Let $\mathcal M$ be a topological space  equipped with
two distance functions
 $d_1$ and $d_2$.
Suppose that any Cauchy sequence of $d_2$ is also 
a Cauchy sequence of $d_1$.
Then the completeness of $d_1$ implies the completeness of $d_2$.
\end{lem}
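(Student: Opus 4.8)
The plan is to start from an arbitrary $d_2$-Cauchy sequence and show that it $d_2$-converges, routing the convergence through the common topology of $\mathcal M$. The key observation, implicit in the statement of the Lemma, is that both $d_1$ and $d_2$ are compatible with the topology of $\mathcal M$, i.e. each of them induces the same topology $\mathcal T$ on $\mathcal M$; this is precisely what lets us transfer convergence information from one metric to the other.

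First I would take a sequence $(x_n)_{n\in\mathbb N}$ that is Cauchy with respect to $d_2$. By the hypothesis of the Lemma, $(x_n)$ is then Cauchy with respect to $d_1$ as well. Since $d_1$ is complete, there exists a point $x\in\mathcal M$ with $d_1(x_n,x)\to 0$, and hence $x_n\to x$ in the topology $\mathcal T$.

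Next, because $d_2$ also induces $\mathcal T$, topological convergence of $(x_n)$ to $x$ is equivalent to $d_2$-convergence, so that $d_2(x_n,x)\to 0$. Thus the arbitrarily chosen $d_2$-Cauchy sequence $(x_n)$ converges with respect to $d_2$, which is exactly the completeness of $d_2$.

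The argument is short, and the only point requiring real care is this bridging step: the whole implication rests on the fact that $d_1$ and $d_2$ metrize the same topology $\mathcal T$, so that $d_1$-convergence and $d_2$-convergence to a common limit are each equivalent to $\mathcal T$-convergence. Dropping this compatibility assumption would make the statement false, since one can metrize $\mathbb R$ by a bounded incomplete distance inducing the usual topology while keeping the standard complete distance. Accordingly, I would make sure the compatibility with $\mathcal T$ is explicitly understood before invoking it. No delicate estimates are needed: the completeness of $d_1$ supplies the limit point, and the shared topology transports the convergence over to $d_2$.
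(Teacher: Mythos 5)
Your proof is correct, and it is essentially the proof the paper has in mind: the paper states this lemma without proof (``straightforward \dots we omit it here''), and your route --- $d_2$-Cauchy $\Rightarrow$ $d_1$-Cauchy by hypothesis $\Rightarrow$ $d_1$-convergent by completeness $\Rightarrow$ convergent in the common topology $\Rightarrow$ $d_2$-convergent by compatibility --- is the standard argument. Your insistence that both distances be compatible with the topology of $\mathcal M$ is also the right reading of the statement, and it is satisfied in the paper's application (Corollary~\ref{comple}), where both distances come from Riemannian metrics on $M_1$ and hence induce the manifold topology.

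One correction, though, to your closing remark: the example you offer does not witness the necessity of compatibility. With $d_1$ the standard distance on $\mathbb R$ and $d_2$ a bounded incomplete distance such as $d_2(x,y)=|\arctan x-\arctan y|$, both distances \emph{do} induce the usual topology, and moreover the hypothesis of the lemma fails for this pair: $x_n=n$ is $d_2$-Cauchy but not $d_1$-Cauchy. So this pair is not a counterexample to the lemma with compatibility dropped; it simply lies outside its hypotheses. A genuine example is the following. Let $\mathcal M=\{0\}\cup\{1/n:n\geq1\}$, let $d_1$ be the Euclidean distance (complete, since $\mathcal M$ is closed in $\mathbb R$), and let $d_2(x,y)=|f(x)-f(y)|$, where $f(1/n)=1/n$ and $f(0)=2$. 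If a sequence is $d_2$-Cauchy, its $f$-image converges in $\mathbb R$, necessarily to $2$, to some $1/k$, or to $0$; in the first two cases the sequence is eventually constant, and in the third case it is of the form $1/k_n$ with $1/k_n\to0$, so in all cases it is $d_1$-Cauchy. Yet $(1/n)$ is $d_2$-Cauchy with no $d_2$-limit, so $d_2$ is incomplete. The point is that here $d_2$ does not induce the topology of $\mathcal M$ (the point $0$ is $d_2$-isolated), which confirms that the compatibility you identified is exactly what makes the lemma true, but it takes an example of this kind, satisfying the Cauchy hypothesis, to demonstrate it.
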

A proof of the above lemma is straightforward
and we omit it here.
\par
We observe that if there exits  a positive number $L$
such that $d_1(x_1,x_2)\geq Ld_2(x_1,x_2)$, 
for each $x_1,x_2\in \mathcal M$ ,
then each Cauchy sequence of $d_2$ is also a Cauchy sequence of $d_1$.
\begin{cor}
\label{comple}
If the Inequality \eqref{boun} holds,
the manifold $(M_1,g_1)$ is complete 
if and only if there exists an $r>k_1$ such that $(M_1,G_r)$ is complete.
\end{cor}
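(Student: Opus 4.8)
The plan is to compare the two Riemannian metrics $g_1$ and $G_r=(k^{-1}+r)g_1$ on $M_1$ directly through their conformal factor $\phi:=k^{-1}+r$, and then to invoke Lemma \ref{th:2} in both directions. First I would show that $\phi$ is pinched between two positive constants. By \eqref{boun} one has $k\geq k_0>0$, whence $\phi=k^{-1}+r\leq k_0^{-1}+r=:c_2$. For the lower bound I would split according to the definition of $k_1$: if $k$ is bounded, then $k\leq K_0$ gives $\phi\geq K_0^{-1}+r=:c_1$, which is strictly positive precisely because $r>k_1=-K_0^{-1}$; if $k$ is unbounded, then $k_1=0$, $r>0$, and $\phi=k^{-1}+r\geq r=:c_1>0$ since $k^{-1}>0$. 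Thus in every case $0<c_1\leq\phi\leq c_2$, and it is exactly the constraint $r>k_1$ that secures $c_1>0$.

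Next, from the pointwise inequalities of quadratic forms $c_1\,g_1\leq G_r\leq c_2\,g_1$ I would deduce the corresponding comparison of lengths of curves, $\sqrt{c_1}\,L_{g_1}(\sigma)\leq L_{G_r}(\sigma)\leq\sqrt{c_2}\,L_{g_1}(\sigma)$, and, passing to the infimum over all curves $\sigma$ joining two fixed points, the bi-Lipschitz comparison of the induced distance functions
\[
\sqrt{c_1}\,d_{g_1}(x,x')\leq d_{G_r}(x,x')\leq\sqrt{c_2}\,d_{g_1}(x,x')\qquad(x,x'\in M_1).
\]
In particular $g_1$ and $G_r$ induce the same topology on $M_1$, which is the standing hypothesis underlying Lemma \ref{th:2}.

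Finally I would read off the two Cauchy implications. Since $d_{g_1}\leq c_1^{-1/2}\,d_{G_r}$, every $G_r$-Cauchy sequence is $g_1$-Cauchy, so Lemma \ref{th:2} (with $d_1=g_1$, $d_2=G_r$) yields that completeness of $(M_1,g_1)$ forces completeness of $(M_1,G_r)$ — and this holds for every $r>k_1$, hence in particular for some such $r$. Conversely, since $d_{G_r}\leq\sqrt{c_2}\,d_{g_1}$, every $g_1$-Cauchy sequence is $G_r$-Cauchy, so Lemma \ref{th:2} with the roles of $d_1$ and $d_2$ interchanged gives that completeness of $(M_1,G_r)$ for a single value of $r$ implies completeness of $(M_1,g_1)$. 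These two implications are the two halves of the stated equivalence.

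The argument is essentially routine once the conformal factor is controlled, so I do not expect a serious obstacle; the only genuinely delicate point is the positivity of the lower constant $c_1$ in the case $k$ unbounded, where one must exploit $r>0$ rather than a bound coming from $K_0$. I would therefore treat the two cases in the definition of $k_1$ explicitly, so that $c_1>0$ — and with it the equivalence of Cauchy sequences — is never in doubt.
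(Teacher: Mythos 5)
Your proof is correct and takes essentially the same route as the paper: both bound the conformal factor $k^{-1}+r$ between positive constants, deduce a bi-Lipschitz comparison of the distance functions $d_{g_1}$ and $d_{G_r}$, and then apply Lemma \ref{th:2} in both directions. Your explicit case split on whether $k$ is bounded (securing the lower constant $c_1>0$ from $r>k_1$) is in fact a bit more careful than the paper's brief boundedness claim for its functions $f_1,f_2$, but it is the same argument.
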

\begin{proof}
We shall denote by $d_{g_1}$, $d_{G_r}$ the distance functions associated
with the Riemannian metrics $g_1$ and $G_r$, respectively.
\par
For any $X\in T_{x_0}M_1$ and $x_0\in M_1$, we have
\begin{eqnarray*}
g_1(X,X)=\frac{k(x_0)}{1+rk(x_0)}G_r(X,X)
\ \hbox{and}\ 
G_r(X,X)=\frac{1+rk(x_0)}{k(x_0)}g_1(X,X)\ ;
\end{eqnarray*}
for any $r>k_1$.
\par
The functions $f_1,f_2:(k_0,+\infty)\to{\mathbb R}$ defined respectively by setting
\begin{eqnarray*}
f_1(t)=\frac{t}{1+rt}
\quad\hbox{and}\quad
f_2(t)=\frac{1+rt}{t}\ ;\quad\forall r\in(k_0,+\infty)
\end{eqnarray*}
are bounded.
\par
Hence, there exist two positive real numbers $k_2$ and $k_3$ such that 
\begin{equation*}
d_{g_1}(x_1,x_2)\leq\sqrt{k_2}d_{G_h}(x_1,x_2)
\ \textrm{ and }\ 
d_{G_r}(x_1,x_2)\leq\sqrt{k_3}d_{g_1}(x_1,x_2)
\end{equation*}
for all $x_1,x_2\in M_1$.
\par
Then our corollary follows immediately from Lemma~\ref{th:2}.
\par
\end{proof}
Finally, we recall that connected, complete, Riemannian manifolds are geodesically connected (see, e. g., \cite{KL}).
\section{Geodesics on \boldmath${(M,G_r+ g_2)}$ 
and \boldmath${(M,g)}$}
In this Section we shall use the geometric objects and the notations introduced in the previous one. 
\begin{lem}
\label{phi}
For any $\mu:I\to M_1$ and for any $r>k_1$, there is a uniquely determined diffeomorphism
$\varphi_r:I\to I$ 
such that 
\begin{eqnarray}
\varphi_r(0)=0, \ \ \ 
 \varphi_r(1)=1
 \nonumber
 \\
\dot\varphi_r=a_r\frac{1+rk}{k}\circ\mu\circ\varphi_r
\label{der1}
\end{eqnarray}
where $a_r$ is a suitable real number.
\end{lem}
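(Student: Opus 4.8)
The plan is to treat \eqref{der1} as a scalar first-order ODE on $I$ carrying a free scaling parameter $a_r$, to solve it explicitly by separation of variables, and then to use the two boundary conditions $\varphi_r(0)=0$, $\varphi_r(1)=1$ to pin down $a_r$. Writing $\psi(t):=\frac{1+rk(\mu(t))}{k(\mu(t))}$ for $t\in I$, equation \eqref{der1} reads $\dot\varphi_r(t)=a_r\,\psi(\varphi_r(t))$. The first thing I would do is verify that $\psi$ is a strictly positive smooth function on $I$. Positivity of $k$ comes from \eqref{boun}, and positivity of $1+rk$ is exactly where the restriction $r>k_1$ enters: if $k$ is bounded, then $k_1=-K_0^{-1}$ and $k\le K_0$ give $rk\ge rK_0>-1$ in case $r<0$, while $rk\ge0$ in case $r\ge0$, so in either situation $1+rk>0$; if $k$ is unbounded one has $k_1=0$, hence $r>0$ and $1+rk>0$ at once.

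Once $\psi>0$ is secured, I would separate variables. Setting
\[
F(u):=\int_0^u\frac{d\sigma}{\psi(\sigma)}=\int_0^u\frac{k(\mu(\sigma))}{1+rk(\mu(\sigma))}\,d\sigma,\qquad u\in I,
\]
the map $F:I\to[0,F(1)]$ is smooth with $F'=1/\psi>0$, hence a strictly increasing diffeomorphism onto its image, with $F(0)=0$. Dividing \eqref{der1} by $\psi(\varphi_r)$ and integrating yields $F(\varphi_r(t))=a_r t$; imposing $\varphi_r(1)=1$ forces
\[
a_r=F(1)=\int_0^1\frac{k(\mu(\sigma))}{1+rk(\mu(\sigma))}\,d\sigma>0,
\]
which is the ``suitable real number'' of the statement. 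I would then define $\varphi_r(t):=F^{-1}(a_r t)$, noting that $a_r t\in[0,F(1)]$ for $t\in I$, so the composition is well posed.

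It remains to check that this $\varphi_r$ does the job and that it is the only candidate. Since $F^{-1}$ is a smooth diffeomorphism and $a_r>0$, $\varphi_r$ is a smooth diffeomorphism of $I$ with $\varphi_r(0)=F^{-1}(0)=0$ and $\varphi_r(1)=F^{-1}(a_r)=F^{-1}(F(1))=1$; differentiating $F(\varphi_r(t))=a_r t$ and using $F'=1/\psi$ recovers $\dot\varphi_r=a_r\,\psi(\varphi_r)$, i.e. \eqref{der1}. For uniqueness, the two endpoint conditions leave no freedom in $a_r$ (the computation above shows it must equal $F(1)$), and once $a_r$ is fixed the smooth, hence locally Lipschitz, right-hand side $a_r\psi$ makes the initial value problem $\dot\varphi_r=a_r\,\psi(\varphi_r)$, $\varphi_r(0)=0$, uniquely solvable by Picard--Lindel\"of. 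The only genuinely delicate point is the positivity argument for $\psi$: everything downstream---invertibility of $F$, positivity of $a_r$, and the diffeomorphism property of $\varphi_r$---rests on $1+rk>0$, which is precisely what the bound $r>k_1$ guarantees in both the bounded and the unbounded case.
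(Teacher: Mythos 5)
Your proof is correct and follows essentially the same route as the paper: the paper also solves \eqref{der1} by passing to the inverse map, defining $\varphi_r^{-1}(s)=\frac{1}{a_r}\int_0^s\frac{k}{1+rk}\circ\mu\,d\xi$ with $a_r:=\int_0^1\frac{k}{1+rk}\circ\mu\,d\xi$, which is exactly your $F(s)/a_r$ obtained by separation of variables. Your write-up merely makes explicit two points the paper leaves implicit, namely that $r>k_1$ guarantees $1+rk>0$ (so the integrand is positive and $F$ is invertible) and the uniqueness of the pair $(\varphi_r,a_r)$.
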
 
\begin{proof}
We shall determine $\varphi^{-1}_r$ and then we shall obtain
$\varphi_r$ as the inverse of $\varphi^{-1}_r$.
\par
Condition \eqref{der1}
is equivalent to 
\[
\frac{d\varphi^{-1}_r}{ds}=\frac{k(\mu(s))}{a_r(1+rk(\mu(s)))}
\]
Hence the map $\varphi^{-1}_r$ is defined by
\begin{equation}
\label{defi1}
\varphi^{-1}_r(s):=\frac1{a_r}\int_0^s\frac{k}{1+rk}\circ\mu\ d\xi\quad ,\ \ \ \ \ \
a_r:=\int_0^1\frac{k}{1+rk}\circ\mu\ d\xi
\ ;
\end{equation}
for any $s\in I$.
\par
As a consequence, $\varphi^{-1}_r$ is a smooth strictly increasing
 diffeomorphism from $I$ onto $I$.
 \par
\end{proof}
We need the following lemma, too.
\begin{lem}
\label{psi1}
For any differentiable curve $\gamma:I\to M_1$, there is a uniquely determined diffeomorphism
$\psi:I\to I$, such that 
\begin{eqnarray}
\psi(0)=0,\ \ \ 
 \psi(1)=1 
 \nonumber
 \\
\dot\psi=\frac{b}{k\circ\gamma}
\ ,
\label{der2}
\end{eqnarray}
where $b$ is a suitable positive real number.
\end{lem}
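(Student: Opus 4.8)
The plan is to integrate \eqref{der2} directly. This is simpler than the situation in Lemma~\ref{phi}, because here the right--hand side depends only on the independent variable and not on $\psi$ itself; consequently no passage to the inverse map is needed. Imposing $\psi(0)=0$, every solution of \eqref{der2} must have the form
\[
\psi(t)=b\int_0^t\frac{1}{k\circ\gamma}\,d\xi\ ,\qquad t\in I\ ,
\]
and the remaining boundary condition $\psi(1)=1$ then forces
\[
b:=\left(\int_0^1\frac{1}{k\circ\gamma}\,d\xi\right)^{-1}\ .
\]

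First I would verify that this $b$ is a well--defined positive real number. Since $\gamma$ is continuous and $k$ is smooth, the function $(k\circ\gamma)^{-1}$ is continuous on the compact interval $I$; moreover Inequality~\eqref{boun} yields $0<(k\circ\gamma)^{-1}\le k_0^{-1}$, so the integral $\int_0^1(k\circ\gamma)^{-1}\,d\xi$ lies in $(0,k_0^{-1}]$ and its reciprocal $b$ is finite and strictly positive.

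With this choice of $b$, the map $\psi$ is smooth with $\dot\psi=b/(k\circ\gamma)>0$ on all of $I$; hence $\psi$ is strictly increasing and, together with $\psi(0)=0$ and $\psi(1)=1$, defines a diffeomorphism of $I$ onto $I$. Uniqueness is immediate: the normalization $\psi(1)=1$ fixes $b$, after which the initial value $\psi(0)=0$ and \eqref{der2} determine $\psi$ by integration. I do not expect a genuine obstacle here; the only point that actually uses the hypotheses is the finiteness and positivity of $b$, which rests entirely on the lower bound \eqref{boun} for $k$.
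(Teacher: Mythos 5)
Your proof is correct and follows the same route as the paper: the paper's proof consists precisely of writing down the explicit formula $\psi(s)=b\int_0^s(k\circ\gamma)^{-1}\,d\xi$ with $b=\bigl(\int_0^1(k\circ\gamma)^{-1}\,d\xi\bigr)^{-1}$, exactly as you do. The additional details you supply (positivity and finiteness of $b$ via the lower bound \eqref{boun}, strict monotonicity of $\psi$, and uniqueness) are exactly the verifications the paper leaves implicit, and they are all sound.
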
 
\begin{proof}
The map $\psi$ is defined by
\begin{equation}
\label{def2}
\psi(s):=b\int_0^s\frac{1}{k\circ\gamma} d\xi,
\ \ \ \
b:=\Bigl(\int_0^1\frac{1}{k\circ\gamma} d\xi\Bigr)^{-1}\ ,
\end{equation}
for any $s\in I$.
\par
\end{proof}
The previous lemma implies:
\begin{theo}
\label{thpa}
Let $\mu:I\to M_1$ and $\nu,\tau:I\to M_2$ be smooth curves and suppose $\tau=\nu\circ\psi$, being $\psi$ defined by the previous lemma.
\par
Then, $\tau$ satisfies \eqref{eq1.2}, if and only if $\nu$ is a geodesic of $g_2$. 
\par
Moreover, it results $\tau(0)=\nu(0)$ and $\tau(1)=\nu(1)$.  
\end{theo}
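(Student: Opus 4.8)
The plan is to substitute the reparametrization $\tau=\nu\circ\psi$ directly into equation~\eqref{eq1.2} (read with $\gamma=\mu$, as in the hypotheses, so that $\psi$ is the diffeomorphism of Lemma~\ref{psi1} attached to $\mu$) and to exploit the defining relation~\eqref{der2} in order to reduce that equation to the geodesic equation for $\nu$. First I would compute the covariant acceleration of $\tau$ in terms of $\nu$ and $\psi$. Since $\dot\tau=\dot\psi\,(\dot\nu\circ\psi)$, the second--order reparametrization formula for the covariant derivative along a curve gives
\[
\overset{\scriptscriptstyle2}\nabla_{\dot\tau}\dot\tau
=\ddot\psi\,(\dot\nu\circ\psi)+(\dot\psi)^2\,(\overset{\scriptscriptstyle2}\nabla_{\dot\nu}\dot\nu)\circ\psi\ .
\]

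The decisive step is to differentiate~\eqref{der2}. From $\dot\psi=b/(k\circ\mu)$ one gets $\ddot\psi=-\,b\,(k\circ\mu)^{-2}\,dk(\dot\mu)$, while the right--hand side of~\eqref{eq1.2} becomes $-\,(k\circ\mu)^{-1}\,dk(\dot\mu)\,\dot\tau=-\,b\,(k\circ\mu)^{-2}\,dk(\dot\mu)\,(\dot\nu\circ\psi)$. Hence the term $\ddot\psi\,(\dot\nu\circ\psi)$ produced by the reparametrization coincides \emph{exactly} with the first--order term on the right--hand side of~\eqref{eq1.2}; this is precisely the cancellation for which $\psi$ was engineered in Lemma~\ref{psi1}. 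After cancelling these two terms, equation~\eqref{eq1.2} collapses to the identity $(\dot\psi)^2\,(\overset{\scriptscriptstyle2}\nabla_{\dot\nu}\dot\nu)\circ\psi=0$.

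To conclude, I would observe that $\dot\psi=b/(k\circ\mu)>0$ in view of~\eqref{boun} and $b>0$, and that $\psi$ is a diffeomorphism of $I$ onto $I$; therefore the displayed identity holds if and only if $\overset{\scriptscriptstyle2}\nabla_{\dot\nu}\dot\nu=0$ on all of $I$, i.e.\ if and only if $\nu$ is a geodesic of $g_2$. The boundary statements $\tau(0)=\nu(0)$ and $\tau(1)=\nu(1)$ then follow at once from $\psi(0)=0$ and $\psi(1)=1$. I do not expect any genuine obstacle here: the argument is a direct computation, and the only point demanding care is the correct second--order reparametrization formula for the covariant derivative together with the sign bookkeeping in $\ddot\psi$, on which the whole cancellation rests.
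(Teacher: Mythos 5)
Your proof is correct and follows essentially the same route as the paper's: expand $\overset{\scriptscriptstyle2}\nabla_{\dot\tau}\dot\tau$ by the reparametrization formula, compute $\ddot\psi$ from \eqref{der2}, and observe that the $\ddot\psi$--term reproduces exactly the right--hand side of \eqref{eq1.2}, leaving $(\dot\psi)^2\,(\overset{\scriptscriptstyle2}\nabla_{\dot\nu}\dot\nu)\circ\psi=0$. If anything, your sign bookkeeping for $\ddot\psi$ is more careful than the paper's (whose intermediate line carries a sign typo), and your explicit appeal to $\dot\psi>0$ and to $\psi$ being a diffeomorphism of $I$ onto $I$ makes the ``if and only if'' step fully precise where the paper merely asserts it.
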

\begin{proof}
In fact, it results 
\begin{eqnarray*}
\overset{\scriptscriptstyle2}\nabla_{\dot\tau}\dot\tau
&=&
(\dot\psi)^2\cdot(\overset{\scriptscriptstyle2}\nabla_{\dot\nu}\dot\nu)
\circ\psi+
\Ddot\psi\cdot\dot\nu\circ\psi
=
\\
& &\overset{\eqref{der2}}=
(\dot\psi)^2\cdot(\overset{\scriptscriptstyle2}\nabla_{\dot\nu}\dot\nu)
\circ\psi+\frac{b}{k^2\circ\mu}((dk)(\dot\mu))\cdot\dot\nu\circ\psi
=\\
& &
=(\dot\psi)^2\cdot(\overset{\scriptscriptstyle2}\nabla_{\dot\nu}\dot\nu)
\circ\psi
-\frac{1}{k\circ\mu}dk(\dot\mu)\cdot\dot\tau\ ;
\end{eqnarray*}
and we have the assertion.
\par
\end{proof}

\begin{lem}
\label{red}
Let $\mu_r,\gamma_r:I\to M_1$ be two smooth curves, such that $\gamma_r=\mu_r\circ\varphi_r$, being $\varphi_r$ the mapping defined by Lemma~\ref{th:2}, with $\mu=\mu_r$. 
\par
Then, $\mu_r$ is a geodesic with respect to the metric $G_r$,
if and only if the curve $\gamma_r$ satisfies the equation:
\begin{equation}
\label{eq1.11}
\overset{\scriptscriptstyle1}\nabla_{\dot\gamma_r}\dot\gamma_r
=
\frac{-1}{2k\circ\gamma_r(1+rk_r\circ\gamma_r)}
g_1(\dot\gamma_r,\dot\gamma_r)
g_1^\sharp(dk)\circ\gamma_r\ .
\end{equation}
Moreover, we have $\mu_r(0)=\gamma_r(0)$ and $\mu_r(1)=\gamma_r(1)$.
\end{lem}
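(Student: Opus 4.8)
The plan is to carry out, in the factor $M_1$, the same reparametrization computation used in the proof of Theorem~\ref{thpa}, now with the diffeomorphism $\varphi_r$ of Lemma~\ref{phi} in place of $\psi$ and with the characterization \eqref{geG} of Lemma~\ref{geg} in place of \eqref{eq1.2}. First I would record the second--order change--of--variable formula for the covariant derivative under $\gamma_r=\mu_r\circ\varphi_r$, exactly as in the opening line of that proof:
\[
\overset{\scriptscriptstyle1}\nabla_{\dot\gamma_r}\dot\gamma_r
=(\dot\varphi_r)^2\,(\overset{\scriptscriptstyle1}\nabla_{\dot\mu_r}\dot\mu_r)\circ\varphi_r
+\ddot\varphi_r\cdot(\dot\mu_r\circ\varphi_r),
\]
together with $\dot\gamma_r=\dot\varphi_r\cdot(\dot\mu_r\circ\varphi_r)$. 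The guiding idea is that both summands on the right can be rewritten entirely in terms of $\dot\gamma_r$ through the substitution $\dot\mu_r\circ\varphi_r=\dot\gamma_r/\dot\varphi_r$, so that the geodesic hypothesis on $\mu_r$ turns into an equation for $\gamma_r$.

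Next I would compute $\ddot\varphi_r$. Since $\mu_r\circ\varphi_r=\gamma_r$, relation \eqref{der1} reads $\dot\varphi_r=a_r\bigl(\tfrac1k+r\bigr)\circ\gamma_r$; differentiating and using $d(k^{-1})=-k^{-2}dk$ gives $\ddot\varphi_r=-a_r\,k^{-2}\!\circ\gamma_r\cdot dk(\dot\gamma_r)$. Dividing by $\dot\varphi_r$ and simplifying $k^2(k^{-1}+r)=k(1+rk)$, the second summand collapses to
\[
\ddot\varphi_r\cdot(\dot\mu_r\circ\varphi_r)=\frac{\ddot\varphi_r}{\dot\varphi_r}\,\dot\gamma_r
=\frac{-dk(\dot\gamma_r)}{k\circ\gamma_r(1+rk\circ\gamma_r)}\,\dot\gamma_r.
\]

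For the first summand I would assume $\mu_r$ is a geodesic of $G_r$ and insert the right--hand side of \eqref{geG}, evaluated along $\varphi_r$. The factor $(\dot\varphi_r)^2$ exactly absorbs the scaling coming from $\dot\mu_r\circ\varphi_r=\dot\gamma_r/\dot\varphi_r$, turning $2dk(\dot\mu_r)\,\dot\mu_r$ into $2dk(\dot\gamma_r)\,\dot\gamma_r$ and $g_1(\dot\mu_r,\dot\mu_r)$ into $g_1(\dot\gamma_r,\dot\gamma_r)$. The one point to watch — and the crux of the argument — is the cancellation: the term $\frac{dk(\dot\gamma_r)\,\dot\gamma_r}{k\circ\gamma_r(1+rk\circ\gamma_r)}$ produced by \eqref{geG} is killed precisely by the second summand computed above, leaving only the $g_1(\dot\gamma_r,\dot\gamma_r)\,g_1^\sharp(dk)$ term, which is \eqref{eq1.11}. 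Since $\varphi_r$ is a diffeomorphism of $I$ and every manipulation above is an equivalence, reading the same chain of equalities backwards yields the converse implication. Finally, $\mu_r(0)=\gamma_r(0)$ and $\mu_r(1)=\gamma_r(1)$ follow at once from $\varphi_r(0)=0$ and $\varphi_r(1)=1$ in Lemma~\ref{phi}. I anticipate no genuine difficulty beyond the careful chain--rule bookkeeping and the verification that this single cancellation is exact.
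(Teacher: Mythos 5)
Your proposal is correct and follows essentially the same route as the paper: the same covariant chain--rule decomposition $\overset{\scriptscriptstyle1}\nabla_{\dot\gamma_r}\dot\gamma_r=(\dot\varphi_r)^2(\overset{\scriptscriptstyle1}\nabla_{\dot\mu_r}\dot\mu_r)\circ\varphi_r+\ddot\varphi_r\cdot(\dot\mu_r\circ\varphi_r)$, substitution of \eqref{geG}, and the exact cancellation of the $dk(\dot\gamma_r)\,\dot\gamma_r$ term against the $\ddot\varphi_r$ term computed from \eqref{der1}. Your remark that the converse follows because every step is an equivalence (after dividing by $\dot\varphi_r\neq 0$) is, if anything, slightly more explicit than the paper's ``the vice versa can be proved in an analogous way.''
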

\begin{proof}
In fact, we have 
\begin{eqnarray*}
\overset{\scriptscriptstyle1}
\nabla_{\dot\gamma_r}\dot\gamma_r
&=&
(\dot\varphi_r)^2\cdot
(\overset{\scriptscriptstyle1}
\nabla_{\dot\mu_r}\dot\mu_r)\circ\varphi_r
+
\Ddot\varphi_r\cdot(\dot\mu_r\circ\varphi_r)
\\
&\overset{\eqref{geG}}=&
\frac{-\dot\varphi_r^2}{2k\circ\mu_r\circ\varphi_r(1+rk\circ\mu_r\circ\varphi_r)}
g_1(\dot\mu_r,\dot\mu_r)\circ\varphi_r\cdot g_1^\sharp(dk)
\circ\mu_r\circ\varphi_r
\\
&&
+
\frac{\dot\varphi_r^2}{k_r\circ\mu_r\circ\varphi_r(1+rk_r\circ\mu_r\circ\varphi_r)}
dk(\dot\mu_r)\circ\varphi_r\cdot\dot\mu_r\circ\varphi_r
\\
&&
+
\Ddot\varphi_r\cdot\dot\mu_r\circ\varphi_r
\\
&\overset{\eqref{defi1}}=&
\frac{-1}{2k_r\circ\gamma_r(1+rk_r\circ\gamma_r)}
g_1(\dot\gamma_r,\dot\gamma_r)
g_1^\sharp(dk)\circ\gamma_r\\
&&{}
+
\frac{1}{k_\circ\gamma_r(1+rk\circ\gamma_r)}
dk(\dot\gamma_r)\cdot\dot\gamma_r
+\Ddot\varphi_h(dk(\dot\gamma_h))\cdot
\dot\mu_h\circ\varphi_h
\\
&\overset{\eqref{der1}}=&
\frac{-1}{2k_r\circ\gamma_r(1+rk_r\circ\gamma_r)}
g_1(\dot\gamma_r,\dot\gamma_r)
g_1^\sharp(dk)\circ\gamma_r\ .
\end{eqnarray*}
Since the vice versa can be proved in an analogous way, our lemma follows.
\par
\end{proof}
\begin{lem}
\label{nor1}
Under the assumptions of the previous lemma, if either $\mu_r$ is a geodesic of $G_r$ or $\gamma_r$ verifies \ref{eq1.11},
we have 
\begin{equation}
\label{enor1}
g_1(\dot\gamma_r,\dot\gamma_r)=a_r^2\frac{(1+rk(x_0))(1+rk\circ\gamma_r)}{k(x_0)k\circ\gamma_r}
\cdot g_1(X_r,X_r)\ ,
\end{equation}
 being $\gamma_r(0)=x_0$ and $X_r=\dot\mu_r(0)$.
\end{lem}
\begin{proof}
In fact, it results
\begin{eqnarray*}
g_1(\dot\gamma_r,\dot\gamma_r)
&=&
(\dot\varphi_r)^2\cdot g_1(\dot\mu_r\circ\varphi_r,\dot\mu_r\circ\varphi_r)
\\
&\overset{\eqref{defi1}}=&
a^2_r\left(\frac{1+rk}{k}\circ\mu_r\circ\varphi_r\right)^2\cdot
g_1(\dot\mu_r\circ\varphi_r,\dot\mu_r\circ\varphi_r)\ .
\end{eqnarray*}
Then, under the assumptions of our lemma, it follows
\begin{eqnarray*}
g_1(\dot\gamma_r,\dot\gamma_r)
=
a^2_r\frac{1+rk\circ\gamma_r}{k\circ\gamma_r}\cdot G_r(\dot\mu_r\circ\varphi_r,\dot\mu_r\circ\varphi_r)\ .
\end{eqnarray*}
From which \eqref{enor1} immediately follows.
\par
\end{proof}
From the above lemma and Lemma \ref{red}, we get the following 
\begin{lem}
Under the assumptions of the previous lemma, if $\mu_r:I\to M_1$ is a geodesic with respect to the metric $G_r$
then
\begin{equation}
\label{eq:3}
\overset{\scriptscriptstyle1}
\nabla_{\dot\gamma_r}\dot\gamma_r
=
\frac{-a^2_r(1+rk(x_0))}{2k(x_0)k^2\circ\gamma_r}\cdot g_1(X_0,X_0)\cdot g_1^\sharp(dk)
\circ\gamma_r\ .
\end{equation}
\end{lem}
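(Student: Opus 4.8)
The plan is to combine the two results just established, namely the geodesic equation \eqref{eq1.11} of Lemma~\ref{red} and the norm formula \eqref{enor1} of Lemma~\ref{nor1}, by a single substitution followed by an algebraic cancellation. Since $\mu_r$ is assumed to be a geodesic of $G_r$ and $\gamma_r=\mu_r\circ\varphi_r$, Lemma~\ref{red} already gives
\[
\overset{\scriptscriptstyle1}\nabla_{\dot\gamma_r}\dot\gamma_r
=
\frac{-1}{2k\circ\gamma_r(1+rk\circ\gamma_r)}\,g_1(\dot\gamma_r,\dot\gamma_r)\,g_1^\sharp(dk)\circ\gamma_r\ ,
\]
where the subscripted $k$ occurring in \eqref{eq1.11} is read as the single function $k$. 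Under the same hypothesis I may invoke \eqref{enor1} to express the scalar factor $g_1(\dot\gamma_r,\dot\gamma_r)$ in terms of the initial data $x_0=\gamma_r(0)$ and $X_0=\dot\mu_r(0)$.

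Concretely, first I would insert
\[
g_1(\dot\gamma_r,\dot\gamma_r)=a_r^2\,\frac{(1+rk(x_0))(1+rk\circ\gamma_r)}{k(x_0)\,k\circ\gamma_r}\,g_1(X_0,X_0)
\]
into the right-hand side of the displayed equation. The key step is then the cancellation of the common factor $(1+rk\circ\gamma_r)$, which appears in the numerator supplied by \eqref{enor1} and in the denominator coming from \eqref{eq1.11}; after merging the two resulting copies of $k\circ\gamma_r$ into $k^2\circ\gamma_r$, the right-hand side collapses to
\[
\frac{-a_r^2(1+rk(x_0))}{2k(x_0)\,k^2\circ\gamma_r}\,g_1(X_0,X_0)\,g_1^\sharp(dk)\circ\gamma_r\ ,
\]
which is precisely \eqref{eq:3}.

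There is essentially no obstacle here: the entire content of the statement is carried by Lemma~\ref{red} and Lemma~\ref{nor1}, so the proof is a one-line substitution and simplification. The only points demanding minimal care are the identification of the vector $X_r=\dot\mu_r(0)$ of \eqref{enor1} with the $X_0$ of \eqref{eq:3} (both denote the initial velocity of the $G_r$-geodesic $\mu_r$), and the legitimacy of the cancellation, which holds because $1+rk\circ\gamma_r>0$ for every $r>k_1$ by the choice of $k_1$ and the bound $k\le K_0$.
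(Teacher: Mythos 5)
Your proof is correct and is exactly the argument the paper intends: the paper states this lemma with no written proof beyond "From the above lemma and Lemma~\ref{red}, we get the following," i.e.\ precisely the substitution of \eqref{enor1} into \eqref{eq1.11} and cancellation of the factor $1+rk\circ\gamma_r$ that you carry out. Your added remarks on identifying $X_r$ with $X_0$ and on the positivity of $1+rk\circ\gamma_r$ for $r>k_1$ are accurate and only make the step more explicit than the paper does.
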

The next lemma characterizes the norm of the vector field
$\dot\tau_r$.
We  skip the proof of this lemma for
it  is very similar to that one of Lemma~\ref{nor1}.
\begin{lem}
Let $\mu_r:I\to M_1$ and $\tau_r,\nu:I\to M_2$ be three smooth curves such that $\tau_r=\nu\circ\psi_r$, being $\psi_r$ defined as in Lemma~\ref{psi1}, by means of $\mu_r$. 
If either $\nu_r$ is a geodesic of $g_2$ or $\tau_r$ is a solution of Equation 2.2, then
\begin{equation}
\label{nor2}
g_2(\dot\tau,\dot\tau)=
\frac{b_r^2}{k^2\circ\gamma_r}\cdot g_2(Y_0,Y_0)\ ;
\end{equation}
with $\nu(0)=y_0$ and $\dot\nu(0)=Y_0$.
\end{lem}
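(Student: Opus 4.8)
The plan is to imitate the computation of Lemma~\ref{nor1}, with the Riemannian metric $g_2$ playing the role that $G_r$ played there and the reparametrization $\psi_r$ playing the role of $\varphi_r$. The starting point is the chain rule: since $\tau_r=\nu\circ\psi_r$, differentiation gives $\dot\tau_r=\dot\psi_r\cdot(\dot\nu\circ\psi_r)$, whence
\[
g_2(\dot\tau_r,\dot\tau_r)=(\dot\psi_r)^2\cdot g_2(\dot\nu\circ\psi_r,\dot\nu\circ\psi_r)\ .
\]

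First I would insert the explicit value of $\dot\psi_r$ furnished by \eqref{der2}, namely $\dot\psi_r=b_r/(k\circ\gamma_r)$, so that the scalar prefactor becomes exactly $b_r^2/(k^2\circ\gamma_r)$, matching the coefficient appearing in \eqref{nor2}. It then remains only to identify the remaining factor $g_2(\dot\nu\circ\psi_r,\dot\nu\circ\psi_r)$ with the constant $g_2(Y_0,Y_0)$.

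This last step is where the two alternative hypotheses must be merged. By Theorem~\ref{thpa}, assuming that $\tau_r$ solves \eqref{eq1.2} is equivalent to assuming that $\nu$ is a geodesic of $g_2$, so in either case $\nu$ is a $g_2$-geodesic. Since the squared speed of a geodesic of a Riemannian metric is a first integral of the geodesic equation, $g_2(\dot\nu,\dot\nu)$ is constant on $I$ and, evaluating at $s=0$, equals $g_2(\dot\nu(0),\dot\nu(0))=g_2(Y_0,Y_0)$. Hence $g_2(\dot\nu\circ\psi_r,\dot\nu\circ\psi_r)\equiv g_2(Y_0,Y_0)$, and substituting into the display above yields \eqref{nor2}. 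I do not anticipate a genuine obstacle here: the only points requiring care are the appeal to Theorem~\ref{thpa} to collapse the alternative hypothesis into the single statement that $\nu$ is a $g_2$-geodesic, and the elementary constancy of the geodesic speed, exactly as Lemma~\ref{nor1} relied on the constancy of $G_r(\dot\mu_r,\dot\mu_r)$.
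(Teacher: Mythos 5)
Your proof is correct and matches the paper's intent: the paper actually omits this proof, remarking only that it is ``very similar to that one of Lemma~\ref{nor1}'', and your argument (chain rule, substitution of $\dot\psi_r$ from \eqref{der2}, then constancy of the $g_2$-speed of the geodesic $\nu$) is precisely the analogue of that computation. Your use of Theorem~\ref{thpa} to reduce the ``either/or'' hypothesis to the single statement that $\nu$ is a $g_2$-geodesic is exactly the equivalence the paper relies on.
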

With the previous notations, we have:
\begin{theo}
\label{suf:2}
Suppose that
the curve $(\mu_r,\nu_r):I\to M$ is a geodesic with respect to the metric 
$G_r+ g_2$
and
\begin{equation}
\label{suf1}
a^2_r\frac{1+rk(x_0)}{k(x_0)}\cdot g_1(X_0,X_0)
=
b^2_rg_2(Y_0,Y_0)\ ;
\end{equation} 
with $\mu_r(0)=x_0$, $\nu_r(0)=y_0$, $\dot\mu_r(0)=X_0$ and $\dot\nu_r(0)=Y_0$.
\par
Then, the curve $(\gamma_r,\tau_r):I\to M$, obtained as in the previous Lemmas is a geodesic with respect
to the metric $g$.
\par
We have $(\mu_r(0),\nu_r(0))=(x_0,y_0)$ and $(\mu_r(1),\nu_r(1))=(\gamma_r(1),\tau_r(1))$, too.
\end{theo}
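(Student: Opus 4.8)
The plan is to verify directly that the pair $(\gamma_r,\tau_r)$ satisfies the two geodesic equations \eqref{eq1.1} and \eqref{eq1.2} that characterize the geodesics of $g$ in Lemma~\ref{th1}, and then to read off the endpoint statements from the constructions used in the preceding lemmas. The whole argument is an assembly of those lemmas, with condition \eqref{suf1} supplying the single scalar identity that glues the two components together.

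First I would exploit that $G_r+g_2$ is a \emph{product} metric. Since the Levi--Civita connection of a product Riemannian metric is the direct sum of the connections of its factors, the hypothesis that $(\mu_r,\nu_r)$ be a geodesic of $G_r+g_2$ forces $\mu_r$ to be a geodesic of $G_r$ and $\nu_r$ to be a geodesic of $g_2$. Applying Lemma~\ref{red} to $\mu_r$, the reparametrized curve $\gamma_r=\mu_r\circ\varphi_r$ satisfies \eqref{eq1.11}, and hence, by the subsequent lemma, also the explicit form \eqref{eq:3}. Applying Theorem~\ref{thpa} to $\nu_r$ (with $\psi_r$ built from $\gamma_r$ as in Lemma~\ref{psi1}), the curve $\tau_r=\nu_r\circ\psi_r$ satisfies \eqref{eq1.2}. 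Thus the second geodesic equation of $g$ already holds for $(\gamma_r,\tau_r)$, and it only remains to establish the first.

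For \eqref{eq1.1}, I would compare the right--hand side of \eqref{eq:3},
\[
\overset{\scriptscriptstyle1}\nabla_{\dot\gamma_r}\dot\gamma_r
=\frac{-a_r^2(1+rk(x_0))}{2k(x_0)k^2\circ\gamma_r}\,g_1(X_0,X_0)\,g_1^\sharp(dk)\circ\gamma_r,
\]
with the target right--hand side $-\tfrac12\,g_2(\dot\tau_r,\dot\tau_r)\,g_1^\sharp(dk)\circ\gamma_r$ of \eqref{eq1.1}. Inserting the norm formula \eqref{nor2}, namely $g_2(\dot\tau_r,\dot\tau_r)=b_r^2\,(k^2\circ\gamma_r)^{-1}g_2(Y_0,Y_0)$, the common factors $k^2\circ\gamma_r$ and $g_1^\sharp(dk)\circ\gamma_r$ cancel, so the two expressions coincide precisely when
\[
a_r^2\,\frac{1+rk(x_0)}{k(x_0)}\,g_1(X_0,X_0)=b_r^2\,g_2(Y_0,Y_0),
\]
which is exactly the hypothesis \eqref{suf1}. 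Hence \eqref{eq1.1} holds as well, and by Lemma~\ref{th1} the curve $(\gamma_r,\tau_r)$ is a geodesic of $g$. The endpoint claims then follow from the constructions: Lemma~\ref{red} gives $\gamma_r(0)=\mu_r(0)$ and $\gamma_r(1)=\mu_r(1)$, while Theorem~\ref{thpa} gives $\tau_r(0)=\nu_r(0)$ and $\tau_r(1)=\nu_r(1)$, so that $(\mu_r(0),\nu_r(0))=(x_0,y_0)$ and $(\mu_r(1),\nu_r(1))=(\gamma_r(1),\tau_r(1))$.

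The only genuinely delicate point is bookkeeping. The two components are reparametrized by \emph{different} diffeomorphisms $\varphi_r$ and $\psi_r$ of the common interval, so one must be careful that each of \eqref{eq1.1} and \eqref{eq1.2} is verified pointwise in the shared parameter $s$ as an identity of vector fields along $\gamma_r$, respectively $\tau_r$ — which is the case, since \eqref{eq:3}, \eqref{nor2} and Theorem~\ref{thpa} are all such pointwise identities. I expect the crux to be precisely the recognition that \eqref{suf1} is the exact compatibility condition equating the two scalar coefficients in front of $g_1^\sharp(dk)\circ\gamma_r$; once the matched powers $k^2\circ\gamma_r$ in \eqref{eq:3} and \eqref{nor2} are observed to cancel, the normalizing constants $a_r,b_r$ and the initial data $X_0,Y_0$ are tied together by \eqref{suf1}, and nothing further is needed.
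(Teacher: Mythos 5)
Your proposal is correct and follows essentially the same route as the paper's own proof: split the product geodesic into a $G_r$-geodesic and a $g_2$-geodesic, get Equation~\eqref{eq1.2} from Theorem~\ref{thpa}, and verify Equation~\eqref{eq1.1} by chaining \eqref{eq:3}, \eqref{suf1} and \eqref{nor2}, with the endpoint claims read off from Lemma~\ref{red} and Theorem~\ref{thpa}. The only difference is presentational: the paper substitutes \eqref{suf1} directly into \eqref{eq:3} before invoking \eqref{nor2}, whereas you match the two sides and identify \eqref{suf1} as the residual compatibility condition — the same computation.
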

\begin{proof}
Since  $(\mu_r,\nu_r):I\to M$ is  a geodesic 
of the metric $G_r+ g_2$  then
 $\mu_r:I\to M_1$ is a geodesic of $G_r$ and
$\nu_r:I\to M_2$ is a geodesic of $g_2$.
Hence from Theorem~\ref{thpa} it follows that 
the curve $(\gamma_r,\tau_r)$ satisfies Equation~\eqref{eq1.2}.
\par
As a consequence, we need only to prove that $(\gamma_r,\tau_r)$ satisfies
Equation~\eqref{eq1.1}.
In fact, we have
\begin{eqnarray*}
\overset{\scriptscriptstyle1}\nabla_{\dot\gamma_r}\dot\gamma_r
&\overset{\eqref{eq:3}}=&
\frac{-a^2_r(1+rk(x_0))}{2k(x_0)k^2\circ\gamma_r}\cdot g_1(X_0,X_0)\cdot g_1^\sharp(dk)
\circ\gamma_r
\\
&
\overset{\eqref{suf1}}=&
\frac{-b^2_r}{2k^2\circ\gamma_r}g_2(Y_0,Y_0)\cdot g_1^\sharp(dk)
\circ\gamma_r
\\
&\overset{\eqref{nor2}}=&
\frac{-1}{2} g_2(\dot\tau_r,\dot\tau_r)\cdot
g_1^\sharp(dk)\circ\gamma_r\ .
\end{eqnarray*}
\end{proof}
Hence, we put the following definition.
\begin{defi}
\rm 
Let $(\mu_r,\nu_r): I\to M$ be a geodesic of $G_r+ g_2$
and let $(\gamma_r,\tau_r)$ be the geodesic of $(M,g)$ 
obtained via the reparametrization by the functions $\varphi_r$ and $\psi_r$ from $(\mu_r,\nu_r)$.
\par
Then, $(\gamma_r,\tau_r)$  is called \emph{Riemannian geodesic 
of $(M,g)$}.
\end{defi}
\begin{rem}
\label{a}
Under the assumptions of the previous theorem we set:
\begin{eqnarray}
\mu_r(0)=x_0=\gamma_r(0)\ ,\
\dot\mu_r(0)=X_0=X_r\ ,\ \dot\gamma_r(0)=\widetilde{X}_r\ 
\end{eqnarray}
and
\begin{eqnarray}
\nu_r(0)=y_0=\tau_r(0)\ ,\ \dot\nu_r(0)=Y_0=Y_r\ ,\dot\tau_r(0)=\widetilde{Y}_r\ .
\end{eqnarray}
Then we have:
\begin{eqnarray}
\label{tan}
\widetilde{X}_r=a_r\frac{1+rk(x_0)}{k(x_0)}X_r\quad\hbox{and}\quad\widetilde{Y}_r=\frac{b_r}{k(x_0)}Y_r\ .
\end{eqnarray}
With these notations, the first identity of \ref{suf1} can be written as
\begin{eqnarray*}
a^2_r\frac{1+rk(x_0)}{k(x_0)}\cdot g_1(X_r,X_r)
=
b^2_rg_2(Y_r,Y_r)\ ;
\end{eqnarray*}
and it is equivalent to
\begin{eqnarray}
\label{suf2}
g_1(\widetilde{X}_r,\widetilde{X}_r)
=
k(x_0)(1+rk(x_0))g_2(\widetilde{Y}_r,\widetilde{Y}_r)\ .
\end{eqnarray}
The previous equality implies that the geodesic $(\widehat{\nu}_r,\widehat{\tau}_r)$ of $g$, having $(x_0,y_0)$ and $(a\widetilde{X}_r,a\widetilde{Y}_r)$ as initial conditions, is a Riemannian geodesic of $g$, for any $a\in{\mathbb R}$.
\end{rem}
From Equation \eqref{suf2} we get
\begin{rem}
Let $\zeta_r=(\gamma_r,\tau_r)$ and $\zeta_s=(\gamma_s,\mu_s)$ be two Riemannian geodesics of $g$, with $r,s>k_1$, such that $\zeta_r(0)=\zeta_s(0)$.
\par
Then $\zeta_r=\zeta_s$, if and only if $r=s$. 
\end{rem}
\begin{theo} 
Suppose $k$ bounded and let $\widetilde{\zeta}=(\gamma,\tau):I\to M$ be a geodesic of $g$, such that $\dot\gamma(0)=\widetilde{X}_0$ and $\dot\tau(0)=\widetilde{Y}_0\not=0$.
\par
If
\begin{eqnarray}
\label{erre}
g_1(\widetilde{X},\widetilde{X})>k(x_0)g_2(\widetilde{Y},\widetilde{Y})\frac{K_0-k(x_0)}{K_0}\ ;
\end{eqnarray}
the curve $\widetilde{\zeta}$ is a Riemannian geodesic of $g$.
\end{theo}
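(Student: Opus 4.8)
The plan is to extract from the initial velocity of $\widetilde\zeta$ the unique value of $r$ for which $\widetilde\zeta$ could possibly be a Riemannian geodesic, and then to recognize the hypothesis \eqref{erre} as nothing but the admissibility condition $r>k_1$ for that value. The relevant characterization is Equation \eqref{suf2} of Remark \ref{a}: a geodesic of $g$ issuing from $(x_0,y_0)$ with initial velocity $(\widetilde X,\widetilde Y)$ is a Riemannian geodesic with parameter $r$ precisely when $g_1(\widetilde X,\widetilde X)=k(x_0)(1+rk(x_0))g_2(\widetilde Y,\widetilde Y)$. Since $\widetilde Y\neq 0$ and $g_2$ is Riemannian we have $g_2(\widetilde Y,\widetilde Y)>0$, so this relation can be solved uniquely for $r$, giving
\[
r=\frac{g_1(\widetilde X,\widetilde X)}{k(x_0)^2\,g_2(\widetilde Y,\widetilde Y)}-\frac{1}{k(x_0)}\ .
\]

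First I would verify that this $r$ satisfies $r>k_1=-K_0^{-1}$ if and only if \eqref{erre} holds. This is a one-line manipulation: adding $K_0^{-1}$ to both sides turns $r+K_0^{-1}>0$ into $g_1(\widetilde X,\widetilde X)/\bigl(k(x_0)^2 g_2(\widetilde Y,\widetilde Y)\bigr)>(K_0-k(x_0))/\bigl(k(x_0)K_0\bigr)$, and multiplying through by the positive quantity $k(x_0)^2 g_2(\widetilde Y,\widetilde Y)$ reproduces exactly \eqref{erre}. Here the assumption $\widetilde Y\neq 0$ is essential, since it is what makes the multiplier strictly positive and keeps the inequality correctly oriented. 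Thus the hypothesis guarantees precisely that the value of $r$ read off above lies in the admissible range $(k_1,+\infty)$.

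It then remains to realize $\widetilde\zeta$ as the reparametrization of a geodesic of $G_r+g_2$ for this admissible $r$. The plan is to fix the directions of $X_r$ and $Y_r$ to be those of $\widetilde X$ and $\widetilde Y$ (these directions are preserved by \eqref{tan}), to choose any speed for the $G_r$-geodesic $\mu_r$ — which by \eqref{defi1} and \eqref{def2} determines $a_r$, $\gamma_r$ and $b_r$, all of them governed by $\mu_r$ alone — and then to fix the speed of the $g_2$-geodesic $\nu_r$ so that the compatibility condition \eqref{suf1} is met. By Theorem~\ref{suf:2} the reparametrized curve $\zeta_r=(\gamma_r,\tau_r)$ is a geodesic of $g$, and by \eqref{tan} its initial velocity is a positive multiple of $(\widetilde X,\widetilde Y)$, since \eqref{suf2} forces the same ratio of magnitudes as the one carried by $(\widetilde X,\widetilde Y)$. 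Rescaling that velocity by the appropriate factor is legitimate by the closing assertion of Remark~\ref{a}, and it leaves $r$ unchanged because \eqref{suf2} is scale invariant; the result is a Riemannian geodesic of $g$ with initial data $\bigl((x_0,y_0),(\widetilde X,\widetilde Y)\bigr)$, which by uniqueness of geodesics of $g$ coincides with $\widetilde\zeta$. I expect this last realization step to be the only genuinely delicate point: one must observe that $\varphi_r$ and $\psi_r$, hence $a_r$ and $b_r$, depend on $\mu_r$ but not on $\nu_r$, so that \eqref{suf1} can be arranged by adjusting $\nu_r$ alone, and only then invoke the scaling freedom and geodesic uniqueness to match the prescribed velocity exactly.
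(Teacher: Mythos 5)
Your proof is correct, and the realization step takes a genuinely different route from the paper's. The first half is identical: the paper also sets $r=\frac{g_1(\widetilde X,\widetilde X)}{k^2(x_0)g_2(\widetilde Y,\widetilde Y)}-\frac{1}{k(x_0)}$ and observes that \eqref{erre} is precisely the condition $r>k_1$. For the second half the paper works \emph{backwards} on the given curve: it defines $\nu_r=\tau\circ\psi_r^{-1}$ and $\mu_r=\gamma\circ\varphi_r^{-1}$ and asserts that these are geodesics of $g_2$ and of $G_r$ (the first by Theorem \ref{thpa}, the second ``in the obvious way''), so that $\widetilde\zeta$ itself is exhibited as the reparametrization of the $G_r+g_2$--geodesic $(\mu_r,\nu_r)$. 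Note that the paper's second assertion conceals a real computation: to apply Lemma \ref{red} one needs the identity $g_1(\dot\gamma,\dot\gamma)=k\circ\gamma\,(1+rk\circ\gamma)\,g_2(\dot\tau,\dot\tau)$ along the \emph{whole} curve, not merely at $t=0$; it holds because $(k\circ\gamma)^2 g_2(\dot\tau,\dot\tau)$ and $g(\dot{\widetilde\zeta},\dot{\widetilde\zeta})$ are constants of motion for the system \eqref{eq1.1}--\eqref{eq1.2}, and the chosen $r$ makes the identity true at $t=0$. You work \emph{forwards} instead: you build a geodesic of $G_r+g_2$ whose initial directions are parallel to those of $(\widetilde X,\widetilde Y)$, tune the speed of $\nu_r$ so that \eqref{suf1} holds (correctly noting that $a_r$ and $b_r$ depend on $\mu_r$ alone), apply Theorem \ref{suf:2}, and then combine the scaling assertion of Remark \ref{a} with uniqueness of solutions of \eqref{eq1.1}--\eqref{eq1.2} to identify the resulting Riemannian geodesic with $\widetilde\zeta$. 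What your route buys is rigor exactly where the paper is vague: you never need the pointwise identity along the curve, only the matching of initial data, and your observation that the norm ratio forced by \eqref{suf2} equals the one carried by $(\widetilde X,\widetilde Y)$ makes the proportionality factor on the two components agree. What it costs is extra machinery: completeness of $G_r+g_2$ (so the comparison geodesic is defined on all of $I$), geodesic uniqueness, and the closing assertion of Remark \ref{a}, which the paper states without proof; the paper's route, once the conservation-law argument is supplied, stays entirely on the given curve and needs none of these.
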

\begin{proof}
We set 
\begin{eqnarray*}
r=\frac{g_1(\widetilde{X},\widetilde{X})}{k^2(x_0)g_2(\widetilde{Y},\widetilde{Y})}-\frac1{k(x_0)}\ .
\end{eqnarray*}
Then a symple calculation shows that $r>k_1$.
\par
Now we consider the curve $\tau$ and we set $\nu_r=\tau\circ\psi_r^{-1}:I\to M_2$, being $\psi_r$ defined by $\gamma$ as in Lemma \ref{psi1}.
\par
Since the curve $\tau$ verifies Equation \eqref{eq1.2}, the curve $\nu_r$ is a geodesic of $g_2$.
\par
Analogously, we set $\mu_r=\gamma\circ\varphi_r^{-1}$, with $\varphi_r$ defined by Lemma \ref{phi}, and $\mu_r$ is a geodesic of $G_r$, in the obvious way.
\par
Finally, the previous contruction implies that $(\gamma,\tau)$ is a Riemannian geodesic of $g$ obtained from the geodesic $(\mu_r,\nu_r)$ of $G_r+g_2$.
\par
\end{proof}
\begin{rem}
If $k$ is unbounded from above and one replaces \eqref{erre} by
\begin{eqnarray*}
g_1(\widetilde{X},\widetilde{X})>k(x_0)g_2(\widetilde{Y},\widetilde{Y})\ ;
\end{eqnarray*}
the previous theorem holds, again.
\end{rem}

\section{Some properties of Riemannian geodesics}

\begin{rem}

Let $\mu_r:I\to M_1$ be a geodesic of $G_r$, with $r>k_1$. 
\par
We recall that there exist a geodesic $\sigma_r:{\mathbb R}\to M_1$ of $G_r$ and $t_0\in{\mathbb R}$ 
such that $(\sigma_r([0,t_0])=\mu_r(I)$, being $G_r$ a complete Riemannian metric. 
\par
Moreover, it results $\dot\mu_r(0)=t_0\dot\sigma_r(0)$.
\par
An analogous statement holds for $g_2$.
\par 
This implies that the mappings $\varphi_r$ and $\psi_r$ defined respectively by Lemmas \ref{phi} and \ref{psi1} can be extended to diffeomorphims from $\mathbb R$ onto $\mathbb R$.
\par
\end{rem}
\begin{theo}
\label{pc}
Let $(\mu_r,\nu_r):{\mathbb R}\to M$ be a geodesic of $G_r+ g_2$, with $r>k_1$. 
\par
Then, for any $\alpha\in{\mathbb R}$, there exist two real numbers $\pm\beta\in{\mathbb R}$, such that the point $(\mu_r(0),\nu_r(0))$ and the point $(\mu_r(\alpha),\nu_r(\pm\beta))$ can be joined by Riemannian geodesics of $g$.
\end{theo}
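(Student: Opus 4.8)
The plan is to reduce the statement to Theorem~\ref{suf:2}: it suffices to exhibit, for each choice of sign, a geodesic of $G_r+g_2$ defined on $I$ whose endpoints are $(\mu_r(0),\nu_r(0))$ and $(\mu_r(\alpha),\nu_r(\pm\beta))$ and whose initial velocity satisfies the balance condition~\eqref{suf1}. Once such a geodesic is produced, its reparametrization by $\varphi_r$ and $\psi_r$ is, by the Definition above and by Theorem~\ref{suf:2}, a Riemannian geodesic of $g$ with the required endpoints, since that theorem guarantees that the reparametrized curve starts at $(\mu_r(0),\nu_r(0))$ and satisfies $(\mu_r(1),\nu_r(1))=(\gamma_r(1),\tau_r(1))$.

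Write $x_0=\mu_r(0)$, $y_0=\nu_r(0)$, $X_0=\dot\mu_r(0)$, $Y_0=\dot\nu_r(0)$. First I would fix $\alpha$ and rescale the two factors affinely onto $I$ by setting
\[
\widehat{\mu}(s):=\mu_r(\alpha s),\qquad \widehat{\nu}_\beta(s):=\nu_r(\beta s),\qquad s\in I .
\]
Since an affine reparametrization of a geodesic is again a geodesic, $\widehat{\mu}$ is a geodesic of $G_r$ from $x_0$ to $\mu_r(\alpha)$ and $\widehat{\nu}_\beta$ is a geodesic of $g_2$ from $y_0$ to $\nu_r(\beta)$; hence $(\widehat{\mu},\widehat{\nu}_\beta)$ is a geodesic of $G_r+g_2$ on $I$ for every $\beta$, with initial velocities $\dot{\widehat{\mu}}(0)=\alpha X_0$ and $\dot{\widehat{\nu}}_\beta(0)=\beta Y_0$. (As $(\mu_r,\nu_r)$ is given on all of $\mathbb R$, the points $\mu_r(\alpha)$ and $\nu_r(\beta)$ are defined for every real $\alpha,\beta$.)

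The crucial observation is that the constants entering~\eqref{suf1} are determined by the first factor alone. Indeed, by Lemma~\ref{phi} the diffeomorphism $\varphi_r$, and hence $\widehat{\gamma}:=\widehat{\mu}\circ\varphi_r$, depends only on $\widehat{\mu}$; by Lemma~\ref{psi1} the number $b_r$ is built from an $M_1$--curve, so it too depends only on $\widehat{\mu}$; and
\[
a_r=\int_0^1\frac{k}{1+rk}\circ\widehat{\mu}\ d\xi>0 ,
\]
the positivity following from $k>0$ and $1+rk>0$ for $r>k_1$. Thus, once $\alpha$ is chosen, both $a_r$ and $b_r$ are fixed positive numbers, independent of $\beta$. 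Substituting $\alpha X_0$ and $\beta Y_0$ into~\eqref{suf1} therefore turns the balance condition into the single scalar equation
\[
\alpha^2 a_r^2\,\frac{1+rk(x_0)}{k(x_0)}\,g_1(X_0,X_0)=\beta^2 b_r^2\,g_2(Y_0,Y_0),
\]
which, when $Y_0\neq0$, has exactly the two solutions
\[
\beta=\pm\,\frac{|\alpha|\,a_r}{b_r}\sqrt{\frac{(1+rk(x_0))\,g_1(X_0,X_0)}{k(x_0)\,g_2(Y_0,Y_0)}}\,.
\]
For each of these values the pair $(\widehat{\mu},\widehat{\nu}_\beta)$ satisfies all the hypotheses of Theorem~\ref{suf:2}, so its reparametrization is a Riemannian geodesic of $g$ joining $(\mu_r(0),\nu_r(0))$ to $(\mu_r(\alpha),\nu_r(\pm\beta))$, which is the assertion. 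The step I expect to be the main obstacle is precisely the decoupling just used: one must verify that the reparametrizing data $\varphi_r,\psi_r$ (hence $a_r,b_r$) genuinely involve only the $M_1$--geodesic $\widehat{\mu}$, so that~\eqref{suf1} reduces to a quadratic in $\beta$ with the first factor fixed; the degenerate case $Y_0=0$ (where $\nu_r$ is constant, forcing $\beta=0$) should then be recorded separately.
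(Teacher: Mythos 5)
Your proposal is correct and follows essentially the same route as the paper's own proof: restrict and rescale the given geodesic to $I$ with initial velocities $\alpha X_0$ and $\beta Y_0$, note that the reparametrizing constants $a_r$, $b_r$ are determined by the $M_1$--factor alone, and solve the balance condition \eqref{suf1} as a scalar quadratic in $\beta$, yielding the two values $\pm\beta$. The only differences are cosmetic: the paper normalizes $\Vert X_r\Vert_1=\Vert Y_r\Vert_2=1$ (thereby implicitly assuming $Y_r\neq0$), whereas you keep the norms explicit and record the degenerate case $Y_0=0$ separately.
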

\begin{proof}
We put $\dot\mu_r(0)=X_r$ and $\dot\nu_r(0)=Y_r$ and suppose $\Vert X_r\Vert_1=\Vert Y_r\Vert_2=1$, with the obvious meaning of the used symbols and without loss of generality.
\par
Then, for any $\alpha\in \mathbb R$ ($\beta\in{\mathbb R}$), the point $\mu_r(\alpha)$ ($\nu_r(\beta)$) is the end point of the geodesic of $G_r$ ($g_2$), determined by the vector $\alpha X_r$ ($\beta Y_r$).
\par
We shall denote by $a_{\alpha r}$ and $b_{\alpha r}$ the constants of Lemmas \ref{der1} and \ref{defi1} determined by means of the geodesic having $(x_0,\alpha X_r)$ as initial condition, respectively.
\par
Then, $X_{\alpha r}$ and $Y_{\beta r}$ verify Condition \eqref{suf1}, if and only if
\begin{eqnarray}
\label{beta}
a^2_{\alpha r}\frac{1+rk(x_0)}{k(x_0)}\alpha^2
=
b^2_{\alpha r}\beta^2\ .
\end{eqnarray}
Hence, the assertion follows by computing $\beta$ from \eqref{beta}.
\par
\end{proof}
\begin{theo}
\label{pc1}
Let $(\mu_r,\nu_r):{\mathbb R}\to M$ be a geodesic of $G_r+ g_2$, with $r>k_1$. 
\par
Then, for any $\beta\in{\mathbb R}$, there exist two real numbers $\pm\alpha\in{\mathbb R}$, such that the point $(\mu_r(0),\nu_r(0))$ and the point $(\mu_r(\pm\alpha),\nu_r(\beta))$ can be joined by Riemannian geodesics of $g$.
\end{theo}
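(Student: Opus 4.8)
The plan is to mirror the proof of Theorem~\ref{pc}, but with the roles of the two parameters interchanged: instead of prescribing $\alpha$ and computing $\beta$, I prescribe $\beta$ and solve for $\alpha$. As in the previous theorem I normalise $g_1(\dot\mu_r(0),\dot\mu_r(0))=g_2(\dot\nu_r(0),\dot\nu_r(0))=1$, write $x_0=\mu_r(0)$, $y_0=\nu_r(0)$, and for each pair $(\alpha,\beta)$ I consider the affinely reparametrised segments $s\mapsto\mu_r(\alpha s)$ and $s\mapsto\nu_r(\beta s)$, $s\in I$. These form a geodesic of $G_r+g_2$ joining $(x_0,y_0)$ to $(\mu_r(\alpha),\nu_r(\beta))$, so by Theorem~\ref{suf:2} its reparametrisation is a Riemannian geodesic of $g$ joining the same two points precisely when the matching condition \eqref{suf1} --- equivalently \eqref{beta} --- holds.

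The difference from Theorem~\ref{pc} is that now both constants $a_{\alpha r}$ and $b_{\alpha r}$ depend on the unknown $\alpha$, so \eqref{beta} is an implicit equation rather than an explicit formula. The first step is therefore to rewrite \eqref{beta} in a tractable form. Using \eqref{defi1} to change variables in $b_{\alpha r}^{-1}=\int_0^1(k\circ\gamma_r)^{-1}\,d\xi$ (with $\gamma_r$ the reparametrisation of $s\mapsto\mu_r(\alpha s)$), the factor $k$ in numerator and denominator cancels and one obtains the clean identity
\[
\frac{a_{\alpha r}}{b_{\alpha r}}=\int_0^1\frac{ds}{1+rk(\mu_r(\alpha s))}=\frac1\alpha\int_0^\alpha\frac{du}{1+rk(\mu_r(u))}=:\frac{\Phi(\alpha)}{\alpha}.
\]
Substituting this into \eqref{beta}, the factor $\alpha^2$ cancels and the matching condition collapses to the single scalar equation $\tfrac{1+rk(x_0)}{k(x_0)}\,\Phi(\alpha)^2=\beta^2$.

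Now $\Phi$ is smooth, satisfies $\Phi(0)=0$, and is strictly increasing because its integrand $1/(1+rk)$ is everywhere positive (recall $1+rk>0$ for $r>k_1$). Hence $\Phi$ is a strictly increasing homeomorphism of $\mathbb R$ onto the open interval $(\Phi(-\infty),\Phi(+\infty))$, and for the prescribed $\beta$ the two required values are $\alpha_+=\Phi^{-1}\!\big(\beta/\sqrt C\big)>0$ and $\alpha_-=\Phi^{-1}\!\big(-\beta/\sqrt C\big)<0$, where $C=\tfrac{1+rk(x_0)}{k(x_0)}$ --- giving the two numbers $\pm\alpha$ of opposite sign asserted in the statement. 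The one point needing genuine argument is that these preimages exist for \emph{every} $\beta$, i.e. that the range of $\Phi$ is all of $\mathbb R$; this is where I expect the main obstacle. When $k$ is bounded, $k\le K_0$ forces $1/(1+rk)\ge 1/(1+rK_0)>0$, so $\Phi(\alpha)\ge\alpha/(1+rK_0)\to+\infty$ and symmetrically $\Phi(\alpha)\to-\infty$, and surjectivity is immediate. When $k$ is unbounded from above one must instead show that $\int_0^{\pm\infty}(1+rk(\mu_r(u)))^{-1}\,du=\pm\infty$, that is, that $k$ grows at most linearly (in an averaged sense) along the complete $G_r$–geodesic $\mu_r$; this is the delicate estimate, and it is exactly here that completeness of $(M_1,G_r)$ together with the growth behaviour of $k$ must be invoked.

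Finally, once $\alpha_\pm$ are produced, Theorem~\ref{suf:2} applied to the geodesics $\big(\mu_r(\alpha_\pm\,\cdot\,),\nu_r(\beta\,\cdot\,)\big)$ of $G_r+g_2$ yields Riemannian geodesics of $g$ joining $(\mu_r(0),\nu_r(0))$ to $(\mu_r(\alpha_\pm),\nu_r(\beta))$, which is the assertion.
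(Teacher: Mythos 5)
Your proposal follows what is evidently the paper's intended route: the paper's entire proof of this theorem is the sentence ``The proof is analogous to the previous one'', and your argument is that analogy actually carried out --- rescale both factors, impose the matching condition \eqref{beta}, and invoke Theorem~\ref{suf:2}. Where you add genuine content is in noticing that the analogy is imperfect: in Theorem~\ref{pc} the constants $a_{\alpha r},b_{\alpha r}$ are determined by the prescribed $\alpha$ and the unknown $\beta$ appears alone, so $\beta$ is given by an explicit formula and always exists; here both constants depend on the unknown $\alpha$, and a surjectivity argument is required. Your reduction is correct: with $b_{\alpha r}$ computed along the reparametrised curve $\gamma_r$ (the convention forced by the proof of Theorem~\ref{suf:2}), the change of variables \eqref{defi1} does give $a_{\alpha r}/b_{\alpha r}=\Phi(\alpha)/\alpha$ with $\Phi(\alpha)=\int_0^\alpha\bigl(1+rk(\mu_r(u))\bigr)^{-1}du$, and \eqref{beta} collapses to $\tfrac{1+rk(x_0)}{k(x_0)}\Phi(\alpha)^2=\beta^2$. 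Two small repairs: your bound $1/(1+rk)\ge 1/(1+rK_0)$ is valid only for $r\ge 0$; for $r\in(k_1,0)$ it reverses, but there $rk<0$ gives $1/(1+rk)\ge 1$, so surjectivity still holds and the bounded case is complete. Also, your two solutions $\alpha_\pm$ are of opposite sign but not exact negatives of each other (since $\Phi$ is not odd), so the ``$\pm\alpha$'' of the statement must be read in that weaker sense --- another asymmetry with Theorem~\ref{pc} that the paper's wording hides.

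The gap you flagged for unbounded $k$ is genuine, but your hope that completeness of $(M_1,G_r)$ plus a growth estimate would close it cannot be realized: the statement itself fails there. Take $M_1=M_2={\mathbb R}$, $g_1=dt^2$, $g_2=ds^2$, $k(t)=1+t^2$, so $k_1=0$ and $r>0$. A unit-speed $G_r$-geodesic satisfies $\dot\mu_r^{\,2}=\bigl((1+\mu_r^2)^{-1}+r\bigr)^{-1}\ge(1+r)^{-1}$, hence $k(\mu_r(u))$ grows quadratically in $u$ and $\Phi(\pm\infty)$ is finite, so your equation $\tfrac{1+rk(x_0)}{k(x_0)}\Phi(\alpha)^2=\beta^2$ has no solution for large $|\beta|$. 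Worse, writing everything in the coordinate $t$ one finds, for every $r'>0$, that the reachable $M_2$-displacement along the $G_{r'}$-geodesic from $x_0$ to $x_1$ is $\int_{x_0}^{x_1}(1+t^2)^{-1/2}\bigl(\tfrac{1+r'k(x_0)}{k(x_0)(1+r'k(t))}\bigr)^{1/2}dt\le\int_{x_0}^{x_1}(1+t^2)^{-1/2}dt$, a bound uniform in $r'$; since in one dimension every $G_{r'}$-geodesic is monotone, no Riemannian geodesic of $g$ at all joins $(x_0,y_0)$ to $(x_1,\nu_r(\beta))$ once $|\beta|$ exceeds this bound. So the theorem is correct exactly under the standing assumption of Section~2 that $k$ is bounded --- the case your argument settles --- and the remaining defect lies in the paper's one-line proof, not in your write-up.
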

\begin{proof}
The proof is analogous to the previous one.
\par
\end{proof}
\begin{cor}
\label{pm}
For any $x_0,x_1\in M_1$, for any $r>k_1$, for any geodesic $\mu_r:I\to M_2$ of $G_r$ joining $x_0$ and $x_1$ and any geodesic $\nu_r:{\mathbb R}\to M_2$ of $g_2$, there exists $\beta\in{\mathbb R}$ such that the points $(x_0,\nu(0))$ and $(x_1,\nu(\pm\beta))$ can be joined by a Riemannian geodesic of $g$, obtained in the obvious way from the previous two geodesics.
\par
An analogous statement holds for any $y_0,y_1\in M_2$.
 \end{cor}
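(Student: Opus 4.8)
The plan is to read this corollary as a direct consequence of Theorem~\ref{pc}, once the two given geodesics have been assembled into a single geodesic of the product metric $G_r+g_2$ defined on all of $\mathbb{R}$.

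First I would invoke completeness. Since $(M_1,g_1)$ is complete, Corollary~\ref{comple} gives that $(M_1,G_r)$ is complete for every $r>k_1$; hence, exactly as in the Remark preceding Theorem~\ref{pc}, the $G_r$-geodesic $\mu_r\colon I\to M_1$ joining $x_0$ and $x_1$ extends to a geodesic $\sigma_r\colon\mathbb{R}\to M_1$ with $\sigma_r(0)=x_0$ and unit $G_r$-speed. Writing $\dot\mu_r(0)=\alpha X_r$ with $\Vert X_r\Vert_{G_r}=1$, we then have $\sigma_r(\alpha)=\mu_r(1)=x_1$. On the $M_2$ side the datum $\nu_r\colon\mathbb{R}\to M_2$ is already a complete $g_2$-geodesic, which after reparametrization to unit speed has initial velocity $Y_r$ with $\Vert Y_r\Vert_{g_2}=1$. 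Because a product metric has as geodesics precisely the pairs of geodesics of the factors, the curve $(\sigma_r,\nu_r)\colon\mathbb{R}\to M$ is a geodesic of $G_r+g_2$.

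Next I would apply Theorem~\ref{pc} to this geodesic with the value $\alpha$ determined above. The theorem produces $\pm\beta\in\mathbb{R}$, with $\beta$ obtained by solving the balance relation~\eqref{beta}, such that the geodesic of $G_r+g_2$ over $I$ with initial velocity $(\alpha X_r,\pm\beta Y_r)$ satisfies~\eqref{suf1}. By Theorem~\ref{suf:2} its reparametrization through $\varphi_r$ and $\psi_r$ (Lemmas~\ref{phi} and~\ref{psi1}) is a Riemannian geodesic of $g$, and its endpoints are exactly $(\sigma_r(0),\nu_r(0))=(x_0,\nu(0))$ and $(\sigma_r(\alpha),\nu_r(\pm\beta))=(x_1,\nu(\pm\beta))$. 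This is precisely the Riemannian geodesic "obtained in the obvious way" from $\mu_r$ and $\nu_r$, so the first assertion follows.

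The analogous statement for $y_0,y_1\in M_2$ is proved in the same manner, with the roles of the two factors exchanged: one extends the given $g_2$-geodesic joining $y_0$ and $y_1$ to a unit-speed geodesic of $M_2$, pairs it with the given complete $G_r$-geodesic in $M_1$, and applies Theorem~\ref{pc1} (rather than Theorem~\ref{pc}) to fix the $M_2$-parameter and solve for $\pm\alpha$. There is essentially no analytic obstacle here; the only point requiring care is bookkeeping, namely matching the arc-length parameter $\alpha$ at which $\sigma_r$ reaches $x_1$ with the unit-speed normalization $\Vert X_r\Vert_{G_r}=1$ used in Theorem~\ref{pc}, and checking that the two sign choices $\pm\beta$ coming from the quadratic~\eqref{beta} correspond exactly to the two target points $(x_1,\nu(\beta))$ and $(x_1,\nu(-\beta))$.
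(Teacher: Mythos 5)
Your proposal is correct and follows exactly the route the paper intends: the paper states this corollary without proof as an immediate consequence of Theorems~\ref{pc} and~\ref{pc1}, and your argument simply spells out that deduction (extend $\mu_r$ to a complete $G_r$-geodesic via Corollary~\ref{comple} and the remark preceding Theorem~\ref{pc}, pair it with $\nu_r$ to get a geodesic of $G_r+g_2$ on $\mathbb{R}$, then apply Theorem~\ref{pc} at the parameter $\alpha$ where the extension reaches $x_1$, and Theorem~\ref{pc1} for the symmetric statement). The bookkeeping you flag (matching $\alpha$ to the normalization and reading off the $\pm\beta$ from~\eqref{beta}) is handled correctly.
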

\begin{defi}
Since Corollary \ref{pm} holds, we shall say that $M_1$ is \emph{positively and negatively geodesically connected with respect to $g$}.
\par
Analogously, we shall say that $M_2$ is \emph{positively and negatively geodesically connected with respect to $g$}.
\par
Finally, we shall say that $M$ is \emph{partially geodesically connected}, when the previous two definitions hold.
\end{defi}
\begin{theo}
Let us consider $x_0,x_1\in M_1$ and let us suppose that there exists a continuous map $X:(k_1,+\infty)\to T_{x_0}M_1$, such that for any $r\in(k_1,+\infty)$ the geodesic $\mu_r:I\to M_1$ of $G_r$, determined by the initial condition $(x_0,X(r))$, joins $x_0$ and $x_1$ and that $\mu_r$ is minimizing.
\par
Then, for any $y_0,y_1\in M_2$, there exists a Riemannian geodesic of $g$ joining $(x_0,y_0)$ and $(x_1,y_1)$.
\end{theo}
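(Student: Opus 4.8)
The plan is to reduce everything to the sufficient condition of Theorem~\ref{suf:2}. It suffices to exhibit a single parameter $r\in(k_1,+\infty)$ together with a minimizing $G_r$-geodesic $\mu_r:I\to M_1$ joining $x_0$ to $x_1$ (supplied by the hypothesis via $X(r)$) and a $g_2$-geodesic $\nu_r:I\to M_2$ joining $y_0$ to $y_1$ (supplied by the geodesic connectedness of the complete connected manifold $(M_2,g_2)$) for which the balance identity \eqref{suf1} is satisfied. Reparametrizing $(\mu_r,\nu_r)$ by the maps $\varphi_r$ and $\psi_r$ of Lemmas~\ref{phi} and~\ref{psi1} then produces, by Theorem~\ref{suf:2}, a Riemannian geodesic of $g$; and since those reparametrizations fix the endpoints, it joins $(x_0,y_0)$ to $(x_1,y_1)$, as required.

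The first step is to rewrite \eqref{suf1} as a single scalar equation in $r$. Using $\frac{1+rk(x_0)}{k(x_0)}g_1(X(r),X(r))=G_r(X(r),X(r))$ and the fact that, $\mu_r$ being minimizing, its constant $G_r$-speed equals the $G_r$-distance, the left-hand side of \eqref{suf1} collapses to $a_r^2\,d_{G_r}(x_0,x_1)^2$. Taking $\nu_r$ to be a minimizing $g_2$-geodesic gives $g_2(Y_0,Y_0)=d_{g_2}(y_0,y_1)^2$ on the right-hand side. Hence \eqref{suf1} becomes $\beta(r)=d_{g_2}(y_0,y_1)$, where
\[
\beta(r):=\frac{a_r\,d_{G_r}(x_0,x_1)}{b_r}\ .
\]
Since $X$ is continuous and geodesics depend continuously on the parameter $r$ and on their initial data, the quantities $a_r$, $b_r$ and $d_{G_r}(x_0,x_1)=\sqrt{G_r(X(r),X(r))}$ are continuous in $r$, so $\beta:(k_1,+\infty)\to(0,+\infty)$ is continuous. (We may assume $y_0\neq y_1$.)

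It then remains to prove that $\beta$ is onto $(0,+\infty)$; the intermediate value theorem will then yield $r_\ast$ with $\beta(r_\ast)=d_{g_2}(y_0,y_1)$ and finish the proof. The limit $r\to+\infty$ is routine: from $\frac{k}{1+rk}\le\frac1r$ we get $a_r\le\frac1r$, from $G_r\le(\frac1{k_0}+r)g_1$ we get $d_{G_r}(x_0,x_1)\le\sqrt{\frac1{k_0}+r}\,d_{g_1}(x_0,x_1)$, and \eqref{boun} gives $b_r\ge k_0$; together these force $\beta(r)\to0$.

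The main obstacle is the opposite limit $r\to k_1^+$, where I must show $\beta(r)\to+\infty$. Parametrizing $\mu_r$ by $g_1$-arclength turns the numerator into $a_r\,d_{G_r}(x_0,x_1)=\int_{\mu_r}\sqrt{\frac{k}{1+rk}}\,ds_{g_1}$, and since $b_r\le K_0$ it suffices to make this integral diverge. Because $k\ge k_0$ keeps the factor $\frac1k+r$ bounded, $d_{G_r}(x_0,x_1)$ stays finite, so the divergence cannot come from length and must be produced entirely by $\frac{1}{1+rk}$ blowing up as $r\to k_1^+=-K_0^{-1}$ at points where $k$ approaches $K_0=\sup k$. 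Establishing this is the delicate point: one must show that the minimizing $G_r$-geodesics are forced into the region where the conformal factor $\frac{K_0-k}{kK_0}$ degenerates, which is exactly where the minimality of $\mu_r$ and the completeness of the factors are needed. The unbounded case $k_1=0$ is handled by the same analysis carried out as $r\to0^+$.
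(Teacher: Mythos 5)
Your proposal follows exactly the paper's route: the same reduction to Theorem~\ref{suf:2}, the same scalar function (the paper's $\beta(r)=\frac{a_r}{b_r}\bigl(\frac{1+rk(x_0)}{k(x_0)}g_1(X(r),X(r))\bigr)^{1/2}$ coincides with your $a_r d_{G_r}(x_0,x_1)/b_r$ by minimality), the same continuity claim, and the same intermediate-value scheme; your treatment of the limit $r\to+\infty$ is correct and even more explicit than the paper's. But you never prove the other limit, $\lim_{r\to k_1^+}\beta(r)=+\infty$: you name it ``the delicate point'' and stop. Since surjectivity of $\beta$ onto $(0,+\infty)$ is precisely what produces the parameter $r_\ast$ solving $\beta(r_\ast)=d_{g_2}(y_0,y_1)$, this is not an omissible verification --- it is the analytic heart of the theorem, and as written your proposal is an outline with that heart missing.

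Moreover, the gap you flagged is genuine and cannot be closed by a sharper estimate. The paper handles it as follows: minimality plus Cauchy--Schwarz give $g_1(X,X)a_r^{-1}\le\frac{1+rk(x_0)}{k(x_0)}g_1(X(r),X(r))$, hence $\beta(r)^2\ge g_1(X,X)\frac{a_r}{b_r^2}$, and then, using $b_r\le K_0$, the paper asserts $\beta(r)^2\ge$ (a constant times) $\frac{k_0}{K_0^2(1+rK_0)}\to+\infty$ as $r\to-K_0^{-1}$. That last step needs $a_r=\int_0^1\frac{k}{1+rk}\circ\mu_r\,dt\ge\frac{k_0}{1+rK_0}$; but since $t\mapsto t/(1+rt)$ is increasing, the hypothesis $k\ge k_0$ only yields $a_r\ge\frac{k_0}{1+rk_0}$, and for $r<0$ one has $\frac{k_0}{1+rk_0}<\frac{k_0}{1+rK_0}$, so the paper's bound does not follow; worse, $\frac{k_0}{1+rk_0}$ stays \emph{bounded} as $r\to-K_0^{-1}$ whenever $k_0<K_0$. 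Divergence of $a_r$ requires $k\circ\mu_r$ to approach $K_0$ --- exactly the phenomenon you said one must establish --- and the hypotheses do not force it. Concretely: let $M_1=\mathbb{R}^2$ be flat, let $k\equiv k_0$ on an enormous ball containing $x_0,x_1$, with $\sup k=K_0>k_0$ approached only far outside. Then for every $r\in(k_1,+\infty)$ the straight segment is the unique minimizing $G_r$-geodesic joining $x_0$ and $x_1$ (leaving the ball costs $G_r$-length at least $\sqrt{(1+rk_0)/k_0}$ times its radius), the map $X(r)$ is constant hence continuous, so all hypotheses hold; yet $a_r=\frac{k_0}{1+rk_0}$, $b_r=k_0$, and $\beta(r)=|x_1-x_0|/\sqrt{k_0(1+rk_0)}$ is bounded on all of $(k_1,+\infty)$. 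So the intermediate-value argument through minimizing geodesics breaks down both in your outline and in the paper itself; a complete proof would need an additional hypothesis forcing the minimizing $G_r$-geodesics into the region where $1+rk\to0$ (respectively, where $k\to+\infty$ in the unbounded case), or else an entirely different construction using non-minimizing geodesics.
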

\begin{proof}
Under the assumptions of the theorem, we consider the function\\ $\beta:(k_1,\infty)\to{\mathbb R}$ defined by setting
\par
\begin{equation*}
\beta(r)=\frac{a_r}{b_r}\left(\frac{1+rk(x_0)}{k(x_0)}\cdot g_1(X(r),X(r))\right)^\frac12\ ;
\end{equation*}
where $a_r$ and $b_r$ are obtained respectively by \eqref{defi1} and \eqref{def2} along the geodesic $\mu_r:I\to M_1$ of $G_r$, for any $r\in(k_1,+\infty)$.
\par
Then, $\beta$ is continuous, too.
\par
Let $\gamma:I\to M_1$ be a minimizing geodesic of $g_1$ joining $x_0$ and $x_1$ and let us set $\dot\gamma(0)=X$.
\par
Since all the involved geodesics are minimizing, we have
\begin{eqnarray*}
g_1(X,X)a_r^{-1}\leq \frac{1+rk(x_0)}{k(x_0)}g_1(X(r),X(r))\leq
g_1(X,X)\int_0^1\frac{1+rk(\gamma(t))}{k(\gamma(t))}dt
\end{eqnarray*}
and
\begin{equation*}
g_1(X,X)\frac{a_r}{b_r^2}\leq\beta^2(r)\leq
g_1(X,X)\frac{a_r^2}{b_r^2}\int_0^1\frac{1+rk(\gamma(t))}{k(\gamma(t))}dt\ ;
\end{equation*}
\par 
The first of the previous inequalities and $k_0>0$ imply
\[
\lim_{r\to k_1}\beta(r)^2\geq\lim_{r\to-K_0^{-1}}\frac{k_0}{K_0^2(1+rK_0)}=+\infty\quad\hbox{and}\quad\lim_{r\to+\infty}\beta(r)^2=0\ .
\]
Hence, it results
\[
\lim_{r\to k_1}\beta(r)=+\infty\quad\hbox{and}\quad\lim_{r\to+\infty}\beta(r)=0\ .
\]
As a consequence of the well known generalization of the Weistrass $\beta$ is onto.
\par
Now, we consider two points $y_0,y_1\in M_2$.
\par
If $y_0=y_1$, the point $(x_0,y_0)$ and the point $(x_1,y_1)$ can be joined by a Riemannian geodesic of $g$ in a trivial way.
\par
Suppose $y_0\not=y_1$, then there exists a geodesic $\nu:{\mathbb R}\to M_2$ of $g_2$ and there exists $\beta_0\in(0,+\infty)$, such that $\nu(0)=y_0$, $g_2(\dot\nu(0),\dot\nu(0))=1$ and $\nu(\beta_0)=y_1$.
\par
Then, the geodesic of $g_2$ having $(y_0,\beta_0\dot\nu(0))$ joins $y_0$ and $y_1$.
\par
Finally, we can consider $r_0\in(k_1,+\infty)$ such that $\beta(r_0)=\beta_0$. With this choice the vectors $X_{r_0}$ and $Y_{r_0}=\beta(r_0)\dot\nu(0)$ verify \eqref{suf1}
and the assertion follows in a trivial way.
\par
\end{proof}
\begin{theo}
Suppose that the manifold $M_1$ is connected, has dimension higher than one, negative sectional curvature and that it is simply connected.
\par Suppose that $M_2$ is connected, too.
\par
Moreover, suppose that $k_0>0$ and that
\begin{eqnarray}
\label{ne}
(\overset{\scriptscriptstyle1}\nabla dk)(e,e))<
\frac{1+4rk}{2k(1+rk)}e(k)^2+
\frac{1}{4k(1+rk)}\Vert dk\Vert^2_1-k(1+rk)\overset{\scriptscriptstyle1}K(\sigma)\ ;
\end{eqnarray} 
for any vector $e\in T_xM_1$, such that $g_1(e,e)=1$ and for any $x\in M_1$.
\par
If $M_1$ and $M_2$ are geodesically connected with respect to the metrics $g_1$ and $g_2$, respectively, then for any $z_0,z_1 \in M$  there exists a Riemannian geodesic of $g$ joining $z_1$ and $z_2$.
\end{theo}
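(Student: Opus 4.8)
The plan is to deduce the statement from the previous theorem. Its hypotheses require, for fixed endpoints $x_0,x_1\in M_1$, a continuous map $X:(k_1,+\infty)\to T_{x_0}M_1$ such that, for every $r>k_1$, the $G_r$--geodesic $\mu_r$ issuing from $x_0$ with $\dot\mu_r(0)=X(r)$ reaches $x_1$ and is minimizing. Granting such an $X$, the previous theorem produces, for the arbitrary second coordinates $y_0,y_1\in M_2$, a Riemannian geodesic of $g$ joining $(x_0,y_0)$ and $(x_1,y_1)$; since $z_0=(x_0,y_0)$ and $z_1=(x_1,y_1)$ are arbitrary, this is exactly the desired conclusion. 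Hence the whole task reduces to constructing $X$ and verifying its continuity.

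First I would use the geometric hypotheses on $M_1$. Inequality \eqref{ne} is precisely the sufficient condition, proved in the Appendix, ensuring that $G_r$ has negative sectional curvature for every $r\in(k_1,+\infty)$. Moreover, since $(M_1,g_1)$ is complete, the bi-Lipschitz equivalence of $d_{g_1}$ and $d_{G_r}$ established in the proof of Corollary~\ref{comple} makes $(M_1,G_r)$ complete for every $r>k_1$. Thus, for each such $r$, $(M_1,G_r)$ is a complete, simply connected manifold of dimension greater than one with negative sectional curvature, so the Cartan--Hadamard theorem shows that $\exp^{G_r}_{x_0}:T_{x_0}M_1\to M_1$ is a diffeomorphism and that the unique $G_r$--geodesic from $x_0$ to $x_1$ is minimizing. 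I then set $X(r):=(\exp^{G_r}_{x_0})^{-1}(x_1)$, so that $\mu_r(t):=\exp^{G_r}_{x_0}(tX(r))$, $t\in I$, is the required minimizing $G_r$--geodesic joining $x_0$ and $x_1$.

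The remaining and genuinely delicate point is the continuity of $r\mapsto X(r)$, which I would obtain from a global inverse function argument. The Christoffel symbols of $G_r$ depend smoothly on the point and on the parameter $r$, so, by smooth dependence of the geodesic flow on parameters and initial data, the map
\[
\Phi:(k_1,+\infty)\times T_{x_0}M_1\to(k_1,+\infty)\times M_1,\qquad \Phi(r,v):=\bigl(r,\exp^{G_r}_{x_0}(v)\bigr),
\]
is smooth. By the previous paragraph each slice $v\mapsto\exp^{G_r}_{x_0}(v)$ is a diffeomorphism, hence $\Phi$ is bijective; its differential is block triangular, with an identity block in the $r$--direction and the invertible block $d\exp^{G_r}_{x_0}$ in the fibre direction, so $\Phi$ is a local diffeomorphism. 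A bijective local diffeomorphism is a diffeomorphism, whence $\Phi^{-1}$ is smooth; since $\Phi^{-1}(r,x_1)=(r,X(r))$, the map $X$ is smooth, in particular continuous. With $X$ so constructed, the hypotheses of the previous theorem hold and the assertion follows as explained above; the only real obstacle is this control of how the unique minimizing $G_r$--geodesic between the fixed endpoints varies with $r$.
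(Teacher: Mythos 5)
Your proposal is correct and takes essentially the same route as the paper: the Appendix gives negative sectional curvature of $G_r$, Cartan--Hadamard (with completeness of $(M_1,G_r)$ coming from Corollary~\ref{comple}) makes $\exp^r_{x_0}$ a diffeomorphism, one sets $X(r)=(\exp^r_{x_0})^{-1}(x_1)$, and the conclusion follows from the previous theorem. The only difference is one of rigor: where the paper merely cites continuous dependence of ODE solutions on a parameter and then asserts continuity of $X$, you actually prove it via the joint map $\Phi(r,v)=\bigl(r,\exp^{G_r}_{x_0}(v)\bigr)$ and a global inverse-function argument, and you also make explicit that Cartan--Hadamard supplies the minimizing property required by the previous theorem --- both details the paper leaves implicit.
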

\begin{proof}
From the Appendix it follows that the sectional curvature of $G_r$ is negative, for any $r>k_1$.
\par
Since $M_1$ is simply connected, the exponential mapping of $G_r$,\\ $exp_x^r:T_xM_1\to M_1$, is a diffeomorphism, for any $x\in M_1$ (see, e. g. \cite{KL}). 
\par
Because of a theorem on the families of systems of ordinary differential equations continuously depending on a parameter, $exp_x^r$ is continuous with respect to $r>k_1$, too.
\par
Let us consider $x_0,x_1\in M_1$ and the map $X:(k_1,\infty)\to T_{x_0}M_1$ defined by setting $X(r)=(exp_{x_0}^r)^{-1}(x_1)$, for any $r\in(k_1,\infty)$.
\par
Then, $X$ is continuous and the assertion follows from the previous theorem.
\par
\end{proof}
\begin{rem}
Obviously, under the assumption of the previous theorem, for $r$ tending to $k_1$ the contribution of $k(\sigma)$ is zero, but the contribution of the second summand tends to $+\infty$.
\end{rem}
Suppose that $M_1={\mathbb R}$ and that $g_1=dt^2$ is the standard metric on $\mathbb R$.
\par
In this case the metric $g=dt^2-k(t)g_2$ coincides with the FLRW--metric (Friedman--Lemaitre--Robertson--Walker metric), with speed of light $c=1$, used in the Big Bang theories and we have:
\begin{theo}
If $M_2$ is complete with respect to the metric $g_2$ and $k$ is bounded from above and bounded from below away from zero, then for any $z_0,z_1 \in M={\mathbb R}\times M_2$  there exists a Riemannian geodesic of $g$ joining $z_1$ and $z_2$.
\end{theo}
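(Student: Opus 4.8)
The plan is to reduce the statement to the earlier reduction theorem of this section, namely the one asserting that whenever there is a continuous map $X:(k_1,+\infty)\to T_{x_0}M_1$ for which the $G_r$-geodesic $\mu_r:I\to M_1$ with initial data $(x_0,X(r))$ joins $x_0$ to $x_1$ and is minimizing, then every pair $(x_0,y_0),(x_1,y_1)$ is joined by a Riemannian geodesic of $g$. Writing $z_0=(x_0,y_0)$ and $z_1=(x_1,y_1)$, it therefore suffices to exhibit such a family $X(r)$ in the case $M_1=\mathbb R$, $g_1=dt^2$, and to check the standing requirements. Since $M_2$ is connected and complete, it is geodesically connected (see \cite{KL}), exactly as that theorem needs; and since $dt^2$ is complete with $0<k_0\le k\le K_0$, Corollary~\ref{comple} gives completeness of $(\mathbb R,G_r)$ for every $r>k_1$.

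First I would produce $X(r)$ explicitly, exploiting that $M_1=\mathbb R$ is one-dimensional. In the $G_r$-arc-length coordinate $\Phi_r(t)=\int_{x_0}^{t}\left(\tfrac1{k(\xi)}+r\right)^{1/2}d\xi$ the metric $G_r$ becomes flat, so the unique geodesic over $I$ from $x_0$ to $x_1$ is $\mu_r(s)=\Phi_r^{-1}\!\bigl(s\,\Phi_r(x_1)\bigr)$, whence
\[
X(r):=\dot\mu_r(0)=\left(\tfrac1{k(x_0)}+r\right)^{-1/2}\int_{x_0}^{x_1}\left(\tfrac1{k(\xi)}+r\right)^{1/2}d\xi .
\]
Because $\mathbb R$ is one-dimensional, this monotone curve is the only geodesic joining the two points and is automatically minimizing, so the minimality hypothesis holds for free. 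Continuity of $X$ on $(k_1,+\infty)$ is then immediate: the integrand is jointly continuous and, $k$ being bounded from above and below away from zero, it stays bounded on the compact interval between $x_0$ and $x_1$ uniformly for $r$ in compact subsets of $(k_1,+\infty)$, so $r\mapsto\Phi_r(x_1)$ is continuous, while the factor $\left(\tfrac1{k(x_0)}+r\right)^{-1/2}$ is continuous and finite there.

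With $X$ in hand, all hypotheses of the reduction theorem are met and the conclusion follows. The step needing the most care is precisely this verification of continuity of $X$ and of minimality of $\mu_r$; beyond it I expect no genuine obstacle, since the delicate analytic part — the continuity and surjectivity of the shift function $\beta(r)$, obtained from the limits $\beta(r)\to+\infty$ as $r\to k_1$ and $\beta(r)\to0$ as $r\to+\infty$ together with the intermediate value theorem — was already dispatched inside the reduction theorem, and the two-sided boundedness of $k$ (in particular $k_1=-K_0^{-1}$) is exactly what makes those limits and integrals behave. The sole function of the hypothesis $M_1=\mathbb R$ is to furnish the continuous family of minimizing $G_r$-geodesics cheaply, replacing the negative-curvature and simple-connectivity construction (which required $\dim M_1>1$) by the elementary geometry of geodesics on a line.
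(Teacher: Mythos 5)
Your proposal is correct and follows essentially the same route as the paper: the paper likewise reduces to the preceding theorem on continuous families of minimizing $G_r$-geodesics, producing the family from the first integral $\dot\mu_r=c_r\bigl(k\circ\mu_r/(1+rk\circ\mu_r)\bigr)^{1/2}$ of the one-dimensional geodesic equation, the resulting monotonicity of $\mu_r$, and the continuity of $\exp^r_{x_0}$ in $r$. Your closed-form expression for $X(r)$ via the arc-length coordinate $\Phi_r$ is just an explicit integration of that same first integral, and it makes the continuity and minimality verifications more direct than the paper's appeal to continuous dependence of ODE solutions on the parameter $r$.
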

\begin{proof}
In this case, the metric tensor $G_r$ on $\mathbb R$ is given by $G_r=(k^{-1}+r)dt^2$, for any $r>-K_0^{-1}$.
\par
Let be $r>-K_0^{-1}$,
then the Equation \eqref{geG} of a geodesic of $G_r$ becomes
\begin{eqnarray*}
\ddot\mu_r=
\frac1{2(k\circ\mu_r)(1+rk\circ\mu_r)}(k'\circ\mu_r)\dot\mu_r^2
\end{eqnarray*}
The previous equation admits a first integral given by
\[
\dot\mu_r=c_r\left(\frac{k\circ\mu_r}{1+rk\circ\mu_r}\right)^\frac12\ .
\]
Because of Corollary \ref{comple}, $\mathbb R$ is complete with respect to the metric $G_r$.
\par
Hence, we can determine $c_r$ as a solution of the equation
\[
c_r=(x_1-x_0)\left(\int_0^1\left(\frac{k(\mu_r(t))}{1+rk(\mu_r(t))}\right)^\frac12dt\right)^{-1}\ .
\]
As a consequence, the mapping $\mu_r:I\to{\mathbb R}$ is strictly increasing, for $x_1>x_0$ and strictly decreasing, for $x_1>x_0$, because the function $k$ is bounded from below by $k_0>0$.
\par
This implies that $exp^r_{x_0}:{\mathbb R}\to{\mathbb R}$ is a diffeomorphism.
\par
Since $\exp^r_{x_0}$ depends with continuity from $r\in(-K_0^{-1},+\infty)$, the proof follows as in the previous case.
\par
\end{proof}
\begin{rem}
The previous theorem holds again, if one replaces the metric $dt^2$ on $\mathbb R$ by the Riemannian metric $fdt^2$, being $f:{\mathbb R}\to{\mathbb R}$ a $C^\infty$--differentiable function such that $0<f(t)<c$, for any $t\in{\mathbb R}$, with $c\in{\mathbb R}$. 
\end{rem}
Now we return to the general case.
\begin{theo}
Let us consider $r\in(k_1,+\infty)$, a geodesic $\mu_r:{\mathbb R}\to M_1$ of $G_r$ and a geodesic $\nu_r:{\mathbb R}\to M_2$ of $g_2$.
\par
If $(\mu_r)_{\vert[0,+\infty)}$ has no auto intersections, there exists a map\\ $\theta_r:\mu_r([0,+\infty))\to\nu_r([0,+\infty))$, such that the points $(\mu_r(0),\nu_r(0))$ and\\ $(\mu_r(t),\theta_r(\nu_r(t)))$, can be joined by a Riemannian geodesic of $g$ obtained from the geodesic $(\mu_r,\nu_r):{\mathbb R}\to M$ of $G_r+ g_2$ in the obvious way and the mapping $\theta_r$ is onto.
\par
Moreover, if $\nu_r$ has no auto intersections, the mapping $\theta_r$ is one to one, too.
\end{theo}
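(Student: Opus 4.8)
The plan is to realise the stated correspondence as the parameter--matching of Theorem~\ref{pc} applied to the fixed geodesics $\mu_r$ and $\nu_r$, and then to study the auxiliary function $t\mapsto\beta(t)$ produced by \eqref{beta}.

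First I normalise $X_r:=\dot\mu_r(0)$ and $Y_r:=\dot\nu_r(0)$ so that $g_1(X_r,X_r)=g_2(Y_r,Y_r)=1$, and I write $x_0:=\mu_r(0)$. Fixing $t\ge0$, I consider the geodesic of $G_r+g_2$ whose factors are the affine reparametrisations $s\mapsto\mu_r(ts)$ of $\mu_r$ and $s\mapsto\nu_r(\beta s)$ of $\nu_r$ on $I$; the constants \eqref{defi1} and \eqref{def2} attached to this segment depend only on $t$ and $r$, being $a_{tr}=F(t)/t$ and $b_{tr}=t/H(t)$ with
\[
F(t):=\int_0^t\frac{k}{1+rk}\circ\mu_r\,du,\qquad H(t):=\int_0^t\frac{1}{k\circ\mu_r}\,du.
\]
By \eqref{beta} the compatibility condition \eqref{suf1} holds for exactly one positive value
\[
\beta(t)=\frac{a_{tr}}{b_{tr}}\,t\,\sqrt{\frac{1+rk(x_0)}{k(x_0)}}=\sqrt{\frac{1+rk(x_0)}{k(x_0)}}\;\frac{F(t)H(t)}{t},
\]
and Theorem~\ref{suf:2} then turns the segment, reparametrised through $\varphi_r$ and $\psi_r$ of Lemmas~\ref{phi} and~\ref{psi1}, into a Riemannian geodesic of $g$ joining $(\mu_r(0),\nu_r(0))$ with $(\mu_r(t),\nu_r(\beta(t)))$. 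I accordingly define $\theta_r(\mu_r(t)):=\nu_r(\beta(t))$; since $\mu_r|_{[0,+\infty)}$ has no self--intersections, $t\mapsto\mu_r(t)$ is injective and $\theta_r$ is a well defined map on $\mu_r([0,+\infty))$.

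For surjectivity I would argue through $\beta$ alone. As $\mu_r$ is smooth, $F$ and $H$ are of class $C^1$, so $\beta$ is continuous and $\beta(0)=0$. Since $k\ge k_0>0$ (and $k\le K_0$ in the bounded case), both integrands are bounded below by positive constants, so $F$ and $H$ grow at least linearly and $\beta(t)\to+\infty$ as $t\to+\infty$. By the intermediate value theorem $\beta([0,+\infty))=[0,+\infty)$, whence $\theta_r(\mu_r([0,+\infty)))=\nu_r(\beta([0,+\infty)))=\nu_r([0,+\infty))$ and $\theta_r$ is onto.

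The last assertion, that $\theta_r$ is one to one when $\nu_r$ has no self--intersections, is where I expect the main obstacle. Under that hypothesis $\theta_r(\mu_r(t_1))=\theta_r(\mu_r(t_2))$ forces $\beta(t_1)=\beta(t_2)$, so injectivity of $\theta_r$ reduces to injectivity of the continuous function $\beta$, that is, to its strict monotonicity. Differentiating, $\beta'(t)>0$ is equivalent to
\[
t\,F'(t)\,H(t)+t\,F(t)\,H'(t)>F(t)\,H(t),\qquad F'=\frac{k}{1+rk}\circ\mu_r,\ \ H'=\frac{1}{k\circ\mu_r},
\]
i.e., after dividing by $F(t)H(t)$, to $A(t)+B(t)>1$ with $A=tF'/F$ and $B=tH'/H$. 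The guiding idea is that the weights $\frac{k}{1+rk}$ and $\frac1k$ are oppositely monotone in $k$, so along $\mu_r$ they ought not both to fall below their running averages at the same parameter. The hard part will be to make this uniform in $t$: the delicate regime is the one in which $k\circ\mu_r(t)$ lies strictly between the two relevant means of its sampled values on $[0,t]$, for there both deficits $1-A$ and $1-B$ are positive, and bounding their sum appears to require the full interplay of the two weights through $k\circ\mu_r$ (or an extra monotonicity input on $k\circ\mu_r$). Once this sign condition is secured for every $t$, $\beta$ is strictly increasing and $\theta_r$ is one to one.
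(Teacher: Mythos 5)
Your construction coincides with the paper's own proof: there too the segment of $(\mu_r,\nu_r)$ from parameter $0$ to $t$ is rescaled to $I$, the constants of \eqref{defi1} and \eqref{def2} along it are computed (in your notation $a'_r=F(t)/t$ and $b'_r=t/H(t)$), condition \eqref{suf1} is solved for $\beta(t)$, and $\theta_r:=\nu_r\circ\beta\circ\mu_r^{-1}$ is set. Incidentally, your $\beta$, with the square root, is the correct solution of \eqref{beta}; the paper's displayed formula $\beta(t)=\frac{a'_r}{b'_r}\frac{1+rk(x_0)}{k(x_0)}t$ omits it. The real difference lies in what is proved about $\theta_r$. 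The paper's argument (part of which, due to a misplaced end of proof, appears after the proof environment) stops once the joining property is checked: neither surjectivity nor injectivity is argued at all, both being simply asserted in the statement. Your intermediate--value argument for ``onto'' is therefore a genuine addition, and it is correct exactly when $k$ is bounded above, where $\frac{k}{1+rk}\geq\frac{k_0}{1+rk_0}$ and $\frac1k\geq\frac1{K_0}$ give the linear growth of $F$ and $H$; when $k$ is unbounded, $H$ may stay bounded and the argument (like the claim itself) remains open.

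As for injectivity, the obstacle you isolate is genuine and cannot be removed without further hypotheses. Since $\mu_r\vert_{[0,+\infty)}$ and $\nu_r$ are injective, one--to--one--ness of $\theta_r$ is equivalent to strict monotonicity of $\beta(t)=c\,F(t)H(t)/t$, i.e.\ to your inequality $A(t)+B(t)>1$, and this can fail. Indeed, in one dimension ($M_1=M_2={\mathbb R}$, $g_1=dx^2$, $g_2=dy^2$, $r=0$) a curve is a geodesic of $G_r$ if and only if it has constant $G_r$--speed, so the profile $\kappa:=k\circ\mu_r$ can be prescribed arbitrarily with values in $[k_0,K_0]$. Take $k_0=\epsilon$, $K_0=1/\epsilon$, and $\kappa\approx1/\epsilon$ on $[0,0.4t^*]$, $\kappa\approx\epsilon$ on $[0.5t^*,0.9t^*]$, $\kappa(t^*)=1$: then $F(t^*)\geq0.4t^*/\epsilon$ and $H(t^*)\geq0.4t^*/\epsilon$, while $t^*F'(t^*)=t^*H'(t^*)=t^*$, so that $A(t^*)+B(t^*)\leq5\epsilon<1$. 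Thus $\beta$ increases near $0$ but decreases at $t^*$, hence is not injective, and neither is $\theta_r$; all hypotheses of the theorem are satisfied by this example, so the ``moreover'' clause is not provable along these lines (nor, as noted, does the paper prove it). The extra input you anticipated does rescue it: if $k\circ\mu_r$ is monotone in either direction, then the integrand of $F$ or of $H$ is nondecreasing, whence $F(t)\leq tF'(t)$ or $H(t)\leq tH'(t)$, so one of $A(t),B(t)$ is $\geq1$ while the other is positive, and $\beta$ is strictly increasing. In short: your proposal reproduces the paper's construction, proves strictly more than the paper does, and the one step you could not complete is a defect of the theorem as stated, not of your argument.
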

\begin{proof}
Under the assumptions of the theorem, we set $\dot\mu_r(0)=X_r$, $\dot\nu_r(0)=Y_r$ and we suppose $\Vert X_r\Vert_1=\Vert Y_r\Vert_2=1$.
\par
We recall that, for any $t\in{\mathbb R}$, the geodesic $\mu'_r$ of $G_r$ determined by the initial conditions $(\mu_r(0),tX_r)$, has  $\mu'_r(I)\subseteq\mu_r({\mathbb R})$, joins $\mu_r(0)$ and $\mu_r(t)$ and the obvious quantities $a'_r$ and $b'_r$ are
\[
a'_r:=\int_0^1\frac{k}{1+rk}\circ\mu(\xi t)\ d\xi\quad\hbox{and}\quad (b'_r)^{-1}=\int_0^1\frac{1}{k\circ\mu_r(\xi t)} d\xi\ .
\]
\par
An analogous statement holds for $\nu_r$.
\par
Now, we notice that, since $\mu_r$ has no autointersections, we can consider the map $\mu_r^{-1}:\mu_r([0,+\infty))\to[0,+\infty)$.
\par
Moreover, the Condition \eqref{suf1} determines the mapping $\beta:[0,+\infty)\to{\mathbb R}$, defined by:
\begin{equation*}
\beta(t)=
\frac{a'_r}{b'_r}\frac{1+rk(x_0)}{k(x_0)}t\ ,\quad\forall t\in[0,+\infty)\ .
\end{equation*} 
\end{proof}
Then, we can set $\theta_r=\nu_r\circ\beta\circ\mu_r^{-1}:\mu_r([0,+\infty))\to\nu_r([0,+\infty))$.
\par
Let us consider $x_1\in\mu_r([0,+\infty))$, then exists $t\in[0,+\infty)$, such that $\mu_r(t)=x_1$, hence $t=\mu_r^{-1}(x_1)$. 
\par
Then, $\beta(t)$ is such that the vectors $tX_r$ and $\beta(t)Y_r$ verify \eqref{suf1}.
\par
As a consequence, the points $(\mu_r(0),\nu_r(0))$ and $(\mu_r(t),\nu_r(\beta(t)))$ can be joined by a Riemannian geodesic for $g$ obtained from the geodesic $(\mu_r,\nu_r)$ of $G_r+ g_2$, with $\nu_r(\beta(t))=\nu_r(\beta(\mu_r^{-1}(x_1)))=\theta_r(x_1)$.

\section{Appendix}
In this Appendix we prove the following lemma:
\begin{lem}
Suppose that the dimension of $M_1$ is higher than one and that $g_1$ has negative sectional curvature.
\par
Then, if $k$ verifies \eqref{ne}, $G_r$ has negative sectional curvature, for any $r\in(k_1,+\infty)$.
\end{lem}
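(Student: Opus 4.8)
The plan is to treat $G_r=\phi\,g_1$ as a \emph{conformal} deformation of $g_1$, with conformal factor $\phi:=k^{-1}+r=\dfrac{1+rk}{k}$. This $\phi$ is strictly positive: by \eqref{boun} we have $k>0$, and $r>k_1$ forces $1+rk>0$ (if $k$ is bounded, $r>-K_0^{-1}$ gives $rk>-k/K_0\ge-1$; if $k$ is unbounded, $r>0$). Writing $\phi=e^{2f}$ with $f:=\tfrac12\log\phi$, I would invoke the standard transformation law of sectional curvature under a conformal change: for any $2$-plane $\sigma\subset T_xM_1$ and any $g_1$-orthonormal basis $\{e_1,e_2\}$ of $\sigma$,
\[
K_{G_r}(\sigma)=\phi^{-1}\Big[\overset{\scriptscriptstyle1}K(\sigma)-\sum_{i=1}^{2}\big((\overset{\scriptscriptstyle1}\nabla df)(e_i,e_i)-(e_if)^2\big)-\|\nabla f\|_1^2\Big].
\]
This identity is valid because $\dim M_1>1$ (so that sectional curvature is defined), and it can be obtained once and for all from the Kulkarni--Nomizu form of the conformal curvature identity, or simply quoted. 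Since $\phi>0$, the sign of $K_{G_r}(\sigma)$ equals the sign of the bracket, so the whole problem reduces to proving that this bracket is negative.

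The second step is to express the conformal data in terms of $k$. Differentiating $f=\tfrac12\log(k^{-1}+r)$ gives
\[
df=-\frac{1}{2k(1+rk)}\,dk,\qquad \|\nabla f\|_1^2=\frac{1}{4k^2(1+rk)^2}\,\|dk\|_1^2 .
\]
Writing $df=\lambda\,dk$ with $\lambda=-\tfrac12(k+rk^2)^{-1}$ and using $\overset{\scriptscriptstyle1}\nabla df=d\lambda\otimes dk+\lambda\,\overset{\scriptscriptstyle1}\nabla dk$, a short computation yields, for every unit vector $e$,
\[
(\overset{\scriptscriptstyle1}\nabla df)(e,e)=\frac{1+2rk}{2k^2(1+rk)^2}\,(ek)^2-\frac{1}{2k(1+rk)}\,(\overset{\scriptscriptstyle1}\nabla dk)(e,e).
\]
Substituting these, together with $(ef)^2=\tfrac{1}{4k^2(1+rk)^2}(ek)^2$, into the bracket and collecting the $(e_ik)^2$ contributions (the coefficients $-\tfrac{1+2rk}{2k^2(1+rk)^2}$ and $+\tfrac{1}{4k^2(1+rk)^2}$ combine to $-\tfrac{1+4rk}{4k^2(1+rk)^2}$) reduces the bracket to
\[
B=\overset{\scriptscriptstyle1}K(\sigma)+\frac{1}{2k(1+rk)}\sum_{i=1}^{2}(\overset{\scriptscriptstyle1}\nabla dk)(e_i,e_i)-\frac{1+4rk}{4k^2(1+rk)^2}\sum_{i=1}^{2}(e_ik)^2-\frac{1}{4k^2(1+rk)^2}\|dk\|_1^2 .
\]

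Finally, I would invoke hypothesis \eqref{ne} twice, once with $e=e_1$ and once with $e=e_2$, taking the plane there to be the given $\sigma$. Multiplying each instance by the positive factor $\tfrac{1}{2k(1+rk)}$ and adding them produces exactly
\[
\frac{1}{2k(1+rk)}\sum_{i=1}^{2}(\overset{\scriptscriptstyle1}\nabla dk)(e_i,e_i)<\frac{1+4rk}{4k^2(1+rk)^2}\sum_{i=1}^{2}(e_ik)^2+\frac{1}{4k^2(1+rk)^2}\|dk\|_1^2-\overset{\scriptscriptstyle1}K(\sigma).
\]
Substituting this bound into $B$ makes every remaining term cancel, leaving $B<0$ and hence $K_{G_r}(\sigma)=\phi^{-1}B<0$, which is the assertion for arbitrary $\sigma$ and arbitrary $r\in(k_1,+\infty)$.

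I expect the main difficulty to be bookkeeping rather than conceptual. The delicate points are getting the signs right in the conformal curvature formula (in particular the $-\|\nabla f\|_1^2$ term, which I would verify against a model such as the upper half-space model of hyperbolic space) and checking that the numerical coefficients in \eqref{ne}, namely $\tfrac{1+4rk}{2k(1+rk)}$ and the $\tfrac14$ factor, are tuned so that summing the two copies over $e_1,e_2$ cancels $B$ term by term. It is worth stressing the role of the hypotheses: $\dim M_1>1$ enters only so that sectional curvature is defined, whereas the negativity of $\overset{\scriptscriptstyle1}K(\sigma)$ does not drive the cancellation directly but guarantees that the right-hand side of \eqref{ne} receives the positive contribution $-k(1+rk)\overset{\scriptscriptstyle1}K(\sigma)>0$, so that \eqref{ne} remains a nonvacuous constraint on the Hessian $\overset{\scriptscriptstyle1}\nabla dk$.
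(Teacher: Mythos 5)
Your proposal is correct and follows essentially the same route as the paper: both treat $G_r$ as a conformal rescaling of $g_1$, arrive at exactly the same expression for its sectional curvature (your bracket $B$ equals the paper's $K_r(\sigma)$ up to the positive factor $\phi^{-1}=k/(1+rk)$), and conclude by summing \eqref{ne} over $e_1$ and $e_2$ with the weight $\tfrac{1}{2k(1+rk)}$, exactly as the paper passes from \eqref{ne} to its summed inequality. The only difference is packaging---you quote the standard $e^{2f}$ conformal transformation law where the paper rederives it via the difference tensor of the two Levi--Civita connections; your version is in fact the cleaner one, since the paper's intermediate formula for the sectional curvature of $hg_1$ carries a sign slip in the first-order bracket (it writes $3e_1(h)^2+3e_2(h)^2-\Vert dh\Vert^2_1$ where the negative of this expression is correct, as your hyperbolic-space check confirms), an error which no longer appears in its final formula for $K_r(\sigma)$.
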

\begin{proof}
Let $\Xi(M_1)$ be the Lie algebra of vector fields on $M_1$.
\par
Let us consider a connection $\nabla^h$ of $M_1$ and let us suppose $\nabla^h=\overset{\scriptscriptstyle1}\nabla+\Pi$.
\par 
Then, the curvature tensor field $R^h$ of $\nabla^h$ and the curvature tensor field $\overset{\scriptscriptstyle1}R$ of $\overset{\scriptscriptstyle1}\nabla$ are related by
\begin{eqnarray*}
& &
R^h(X,Y)Z=
\overset{\scriptscriptstyle1}R(X,Y)Z\\
& &
+\Pi(X,\Pi(Y,Z))-\Pi(Y,\Pi(X,Z))+\\
& &
(\overset{\scriptscriptstyle1}\nabla_X\Pi)(Y,Z)-
(\overset{\scriptscriptstyle1}\nabla_Y\Pi)(X,Z) ,\quad\forall X,Y,Z\in\Xi(M_1)\ .
\end{eqnarray*}

\par
Now we suppose that $h:M_1\to{\mathbb R}$ is a $C^\infty$--differentiable function and that $h(x)>0$, for any $x\in M_1$.
\par
We also suppose that $\nabla^h$ is the Levi--Civita connection of the metric tensor $hg_1$.
\par
Then, we have
\begin{eqnarray*}
\Pi(X,Y)&=&\frac1{2h}\left[X(h)Y+Y(h)X-g_1(X,Y)g_1^\sharp(d\log h)\right]\\
& &
\forall X,Y\in\Xi(M_1)\ .
\end{eqnarray*}
The two previous identities imply
\begin{eqnarray*}
& &
R^h(X,Y)Z=\overset{\scriptscriptstyle1}R(X,Y)Z+\\
& &
\frac1{2h}[(\overset{\scriptscriptstyle1}\nabla dh)(X,Z)Y-
(\overset{\scriptscriptstyle1}\nabla dh)(Y,Z)X-\\
& &
g_1(Y,Z)g_1^\sharp(\overset{\scriptscriptstyle1}\nabla_Xdh)+
g_1(X,Z)g_1^\sharp(\overset{\scriptscriptstyle1}\nabla_Ydh)]-\\
& &
\frac{1}{4h^2}[3Y(h)Z(h)X-3X(h)Z(h)Y-\\
& &
Y(h)g_1(X,Z)g_1^\sharp(dh)+
X(h)g_1(Y,Z)g_1^\sharp(dh)+\\
& &
g_1(Y,Z)\Vert d\log h\Vert_1^2X-
g_1(X,Z)\Vert d\log h\Vert_1^2Y]\ ,\quad\forall X,Y,Z\in\Xi(M_1)\ .
\end{eqnarray*}
Let $\sigma=<\{e_1,e_2\}>$ be a two dimensional subspace of $T_xM_1$, with $x\in M_1$ and let us suppose $\Vert e_1\Vert_1=\Vert e_2\Vert_1=1$ and $g_1(e_1,e_2)=0$.
\par
Then, the sectional curvature of $\nabla^h$ is
\begin{eqnarray*}
& &
K(\sigma)=\frac1{h}\overset{\scriptscriptstyle1}K(\sigma)-
\frac1{2h^2}[(\overset{\scriptscriptstyle1}\nabla dh)(e_1,e_1)+
(\overset{\scriptscriptstyle1}\nabla dh)(e_2,e_2)]-\\
& &
\frac{1}{4h^3}[3e_1(h)^2+
3e_2(h)^2-\Vert dh\Vert^2_1]\ ;
\end{eqnarray*}
being $\overset{\scriptscriptstyle1}K$ the sectional curvature of $\overset{\scriptscriptstyle1}\nabla$.
\par 
Now we suppose $h=k^{-1}+r$, where $k$ is the mapping used in the previous numbers and $r>K_0^{-1}=k_1$.
\par
Then, $dh=-k^{-2}dk$ and $\overset{\scriptscriptstyle1}\nabla dh=2k^{-3}dk\otimes dk-k^{-2}\overset{\scriptscriptstyle1}\nabla dk$.
\par 
Hence, the sectional curvature of $G_r$ is
\begin{eqnarray*}
& &
K_r(\sigma)=\\
& &
\frac{k}{1+rk}\overset{\scriptscriptstyle1}K(\sigma)+
\frac{1}{2(1+rk)^2}[(\overset{\scriptscriptstyle1}\nabla dk)(e_1,e_1)+
(\overset{\scriptscriptstyle1}\nabla dk)(e_2,e_2)]-\\ 
& &
\frac{1+4rk}{4k(1+rk)^3}[e_1(k)^2+e_2(k)^2]-\\
& &
\frac{1}{4k(1+rk)^3}\Vert dk\Vert^2_1\ ;
\end{eqnarray*} 
for any two dimensional subspace $\sigma\subseteq T_xM_1$, for any $(e_1,e_2)$ basis of $\sigma$ such that $\Vert e_1\Vert_1=\Vert e_2\Vert_1=1$ and $g_1(e_1,e_2)=0$ and for any $x\in M_1$.
\par
As a consequence, the sectional curvature of $G_r$ is negative, for any $r>k_1$, if and only if
\begin{eqnarray}
\label{bi}
& &
(\overset{\scriptscriptstyle1}\nabla dk)(e_1,e_1)+
(\overset{\scriptscriptstyle1}\nabla dk)(e_2,e_2)<\\ 
\nonumber
& &
\frac{1+4rk}{2k(1+rk)}[e_1(k)^2+e_2(k)^2]+\\
& &
\nonumber
\frac{1}{2k(1+rk)}\Vert dk\Vert^2_1-2k(1+rk)\overset{\scriptscriptstyle1}K(\sigma)\ ;
\end{eqnarray} 
We notice that, if the sectional curvature $\overset{\scriptscriptstyle1}K$ of $g_1$ is positive, then the Inequality \eqref{bi} can not hold for any $r>k_1$.
\par
Hence, we are forced to suppose the $g_1$ has either a negative or null sectional curvature.
\par
In this case, the Inequality \eqref{bi} holds, if and only if, it results
\begin{eqnarray}
\label{bid}
(\overset{\scriptscriptstyle1}\nabla dk)(e,e)<
\frac{1+4rk}{2k(1+rk)}e(k)^2+
\frac{1}{4k(1+rk)}\Vert dk\Vert^2_1-k(1+rk)\overset{\scriptscriptstyle1}K(\sigma)\ ;
\end{eqnarray} 
for any $e\in T_xM_1$, such that $g_1(e,e)=1$ and any $x\in M_1$.
\par
\end{proof}

\vfill
\newpage


\begin{thebibliography}{due}
\bibitem{ACF}
O. M. Amici, B. C. Casciaro, M. Francaviglia:
\emph{Covariant Second Variation 
for First Order Lagrangians on Fibred Manifolds
II: Generalized Curvature and Bianchi Identities}, 
Rendiconti di Matematica,
Serie VII, vol 16, Roma (1996), pp. 637--669.
\bibitem{CF}
B. C. Casciaro, M. Francaviglia:
\emph{Covariant 
Second Variation for First Order Lagrangians on Fibred
Manifolds I: Generalized Jacobi Fields},
 Rend. Mat. Serie VII, Vol. 16.
Roma(1996), pp. 233--264
\bibitem{CK}
B. C. Casciaro, J. J. Konderak:
\emph{Tensorial 
Formulation of the Calculus of Variation},
 Univ. Iagiellonicae Acta Math.,
(4), 2002.
\bibitem{G}
F. Giannoni, P. Piccione, R. Sampalmieri:
\emph{On the Geodesical Connectedness of Semi Riemannian Manifolds},
J. Math. Anal. Appl. 252, No. 1(2000), pp. 444--471.
\bibitem{KL}
W. Klingenbeg: \emph{Riemannian Geometry}, Walter de Gruyter, Berlin, 1982.
\bibitem{KN}
S. Kobayashi, K. Nomizu:
\emph{Foundations of Differential Geometry},
Vol. I, II, Interscience Publishers 1964, 1967.

\bibitem{ON}
B.O'Neill:
\emph{Semi Riemannian Geometry, with Applications
to Relativity},
Academic Press, New York, 1983.
\bibitem{R}
H.Rund:
\emph{The Hamilton--Jacobi Theory in the 
Calculus of Variations}, D. Van
Nostrand Company LTD, London, 1966.
\bibitem{BG}
L. Bergstr\"om, G. Goober: \emph{Cosmology and Particle Astrophisycs}, Springer Verlag, Berlin, 2004.
\end{thebibliography}
\end{document}